\newcommand{\beq}{\begin{equation}}
\newcommand{\eeq}{\end{equation}}
\newtheorem{theorem}{Theorem}[section]
\newtheorem{corollary}[theorem]{Corollary}
\newtheorem{lemma}[theorem]{Lemma}
\theoremstyle{definition}
\newtheorem{definition}[theorem]{Definition}
\theoremstyle{remark}
\newtheorem{remark}[theorem]{Remark}
\newtheorem*{example}{Example}
\numberwithin{equation}{section}
\newcommand{\rmd}{\mathrm{d}}
\newcommand{\rmi}{\mathrm{i}}
\newcommand{\I}{\mathbb{I}}
\newcommand{\cI}{\mathcal{I}}
\newcommand{\cE}{\mathcal{E}}
\newcommand{\cU}{\mathcal{U}}
\newcommand{\cH}{\mathcal{H}}
\newcommand{\cZ}{\mathcal{Z}}
\newcommand{\cS}{\mathcal{S}}
\newcommand{\fM}{\mathfrak{M}}
\newcommand{\ud}{\frac{1}{2}}
\newcommand{\uds}{\textstyle\frac{1}{2}}
\newcommand{\og}{{\mathop{\Bar{g}}}}
\newcommand{\sku}{\vspace*{0.1cm}}
\newcommand{\skd}{\vspace*{0.2cm}}
\newcommand{\skt}{\vspace*{0.3cm}}
\def\staccrel#1#2{\mathrel{\mathop{#1}\limits_{#2}}}
\begin{document}

\title[Fredholm integral equations of the first kind]
{Fredholm integral equations of the first kind and topological information theory}

\author[E. De Micheli]{Enrico De Micheli}
\address{IBF - Consiglio Nazionale delle Ricerche, Via De Marini, 6 - 16149 Genova, Italy}
\email{enrico.demicheli@cnr.it}

\author[G. A. Viano]{Giovanni Alberto Viano}
\address{Dipartimento di Fisica, Universit\`a di Genova -
Istituto Nazionale di Fisica Nucleare - Sezione di Genova,
Via Dodecaneso, 33 - 16146 Genova, Italy}
%\email{viano@ge.infn.it}

\subjclass{Primary 45B05, 47A52; Secondary 94A05}

\keywords{Fredholm integral equations, regularization theory, $\varepsilon$-entropy, $\varepsilon$-capacity}

\begin{abstract}
The Fredholm integral equations of the first kind are a classical example
of ill-posed problem in the sense of Hadamard. If the integral operator
is self-adjoint and admits a set of eigenfunctions, then a formal solution can be written in terms of eigenfunction
expansions. One of the possible methods of regularization consists in truncating
this formal expansion after restricting the class of admissible solutions
through a-priori global bounds. In this paper we reconsider various
possible methods of truncation from the viewpoint of the $\varepsilon$-coverings
of compact sets. 
\end{abstract}

\maketitle

\section{Introduction}
\label{se:introduction}
We consider the Fredholm integral equations of the first kind
\beq
(Af)(x)=\int_a^b K(x,y)f(y)\,\rmd y = g(x) 
\qquad (a\leqslant x\leqslant b),
\label{1}
\eeq
whose kernel is supposed to be Hermitean and square-integrable, i.e.,
\beq
K(x,y) = \overline{K(y,x)}, \nonumber
\label{2}
\eeq
and
\beq
\int_a^b\left[\int_a^b\left|K(x,y)\right|^2\,\rmd x\right]\,\rmd y < \infty. \nonumber
\label{3}
\eeq
For simplicity, we shall suppose hereafter that the kernel $K$, the data function $g$ and the
unknown function $f$ are real-valued functions; in addition, we assume that the interval
$[a,b]$ is a bounded and closed subset of the real axis.
The operator $A$ acts as follows, $A:X\to Y$, where $X$ and $Y$ are respectively the solution and the data space.
We assume that $X\equiv Y\equiv L^2(a,b)$. Then $A$ is a self-adjoint compact operator.
Further, we assume throughout the paper that the range of $A$ is infinite dimensional. Accordingly,
the integral operator $A$ admits a set of eigenfunctions $\{\psi_k\}_{k=1}^\infty$ and, correspondingly, a countable infinite
set of eigenvalues $\{\lambda_k\}_{k=1}^\infty$. The eigenfunctions form an orthonormal
basis of the orthogonal complement of the null space of the operator $A$ and therefore
an orthonormal basis of $L^2(a,b)$ when $A$ is injective. Then the Hilbert-Schmidt theorem guarantees that 
$\lim_{k\to+\infty}\lambda_k=0$. Next we assume that the sequence of eigenvalues (which are supposed to be positive)
is (non-strictly) decreasing, counting multiple eigenvalues with respect to their multiplicity,
i.e., $\lambda_1\geqslant\lambda_2\geqslant\lambda_3\geqslant\cdots$. Let us however observe that
the assumption of positivity of the eigenvalues (here made for the sake of simplicity) can be
easily relaxed by considering in the subsequent analysis the moduli of the eigenvalues.

We suppose that the unique solution of the equation $Af=0$ is $f \equiv 0$, so that the uniqueness 
of the solution to \eqref{1} is guaranteed. But, as is well-known, uniqueness does not imply
(in the case considered here of $L^2$-spaces) continuous dependence of the solution on the data.
Next, by the Hilbert-Schmidt theorem we associate with the integral equation \eqref{1} the following
eigenfunction expansion:
\beq
f(x) = \sum_{k=1}^\infty \frac{g_k}{\lambda_k}\,\psi_k(x)
\qquad (x\in[a,b]),
\label{4}
\eeq
where $g_k=(g,\psi_k)$ ($(\cdot,\cdot)$ denoting the scalar product in $L^2(a,b)$). 
The series \eqref{4} converges in the $L^2$-norm.

The solution to Eq. \eqref{1} is however not so simple as one could expect
just looking at expansion \eqref{4}. The difficulties emerge in view of the
following problems.

\begin{itemize}
\item[(a)] The range of $A$ is not necessarily closed in the data space $Y$. Therefore,
given an arbitrary function $g\in Y$, there does not necessarily exists a solution $f\in X$.
\item[(b)] Even if two data functions $g_1$ and $g_2$ do belong to the range of $A$,
and their distance in $Y$ is small, nevertheless the distance between $A^{-1}g_1$
and $A^{-1}g_2$ can be arbitrarily large, in view of the fact that the inverse of
the compact operator $A$ is not bounded.
\end{itemize}

The difficulties mentioned above represent indeed the ill-posed character,
in the sense of Hadamard \cite{Hadamard}, of the Fredholm integral equations of the 
first kind (see, e.g., Ref. \cite{Wing}).

Let us now note that, in practice, there always exists some inherent noise affecting the data
(at least the roundoff numerical error) and, therefore, instead of Eq. \eqref{1}
we have to deal with the following equation (assuming an additive model of noise \cite{Bissantz,Eggermont}):
\beq
Af+n = \og \qquad (\og=g+n),
\label{5}
\eeq
where $n$ represents the noise. Therefore, instead of expansion \eqref{4}, we have to handle
the following expansion:
\beq
\sum_{k=1}^\infty \frac{\og_k}{\lambda_k}\,\psi_k(x),
\label{6}
\eeq
where $\og_k=(\og,\psi_k)$. Then the difficulties indicated in the points (a) and (b)
emerge clearly. We are thus forced to make use of the so-called methods of regularization.

The literature on these methods is very extensive and any list of references can hardly be exhaustive
(see, e.g., Refs. \cite{Eggermont,Engl,Groetsch,Louis,Mathe,Tenorio,Tikhonov} and the references quoted therein). 
In this paper we limit ourselves to consider only one of the possible approaches to regularization,
precisely, the procedure which consists in truncating suitably expansion \eqref{6}, that is,
stopping the summation at a certain finite value of $k$.
The simplest example of truncation is to stop expansion \eqref{6} at the largest value
of $k$ such that $\lambda_k\geqslant(\varepsilon/E)$ (where $\varepsilon$ is a bound on the norm of the noise
and $E$ is an a-priori global bound on the norm of the solution). This value of $k$ will be called the
truncation point and will be denoted hereafter by $k_1$. Of course, later in the paper (see Sect. \ref{se:entropy}),
we shall consider even other different types of truncation.

The main purpose of this work is connecting the truncation method of expansion \eqref{6} 
to the covering of \emph{compacta} in a sense that will be specified later in the paper
(see Definition \ref{def:1}, Remark \ref{rem:2}, and Lemma \ref{lem:2} for the case of compact ellipsoids).
In fact, the problem being considered can be reduced to the analysis of coverings of compact ellipsoids belonging
to the range of the operator $A$. This covering problem can be appropriately treated
within the framework of Kolmogorov's theory of $\varepsilon$-entropy and $\varepsilon$-capacity 
\cite{Kolmogorov,Lorentz}.
It is well-known that this theory, which makes use of general ideas of information and communication theory, 
is not founded on probabilistic methods. Accordingly, it can be properly called \emph{Topological Information Theory}.
Let us recall that the Kolmogorov theory of $\varepsilon$-entropy and $\varepsilon$-capacity
of compacta in functional spaces has played a relevant role in modern analysis, including
the problem of the representation of continuous functions of several variables by functions of one variable,
which is connected with Hilbert's Thirteenth Problem; the latter contained (implicitly) the conjecture
that not all continuous functions of three variables are representable as superpositions of continuous
functions of two variables. This Hilbert's conjecture was refuted in 1957 by Kolmogorov and Arnold
(see Refs. \cite[p. 169]{Lorentz} and \cite{Kolmogorov}). 
In the present paper we apply this theory to the regularization of Fredholm integral equations of first kind, 
obtained by means of truncation procedures. We thus complete a preliminary approach 
to this question given in Ref. \cite{DeMicheli}.

\skd 

The regularization obtained by truncating expansion \eqref{6} requires rather
restrictive assumptions in order to be numerically realizable:

\sku

\noindent
{\bf Assumption (A).} We assume that the significant contribution to the unknown
function $f$ (see \eqref{1}) is brought by those components $\og_k$ which are
retained by the appropriate truncation of expansion \eqref{6}, i.e., the spectral distribution $f_k$
($f_k=(f,\psi_k)$) of the function $f$ is assumed to be positively skewed so that neglecting the subset 
$\{\og_k\}_{k=k_1+1}^\infty$ of the data (if the truncation point is the value $k_1$ introduced above)
is indeed feasible.

\sku

Stated in other words, assumption (A) amounts to excluding those functions whose Fourier
components are small, or even null, for small values of $k$, while the significant contributions are
brought by the components at \emph{intermediate or high values} of $k$, which are
cut away by the truncation procedure. We shall return on this point with more details later in the paper.

Now, in order to establish the truncation point of expansion \eqref{6} by means of the theory
of covering of compacta, we need to make another assumption:

\sku

\noindent
{\bf Assumption (B).} The perturbation due to the noise must be such that the noisy data function $\og$
still belongs to the range of the operator $A$, i.e., $\sum_{k=1}^\infty (\og_k/\lambda_k) < \infty$.

\skd

The paper is organized as follows. In Sect. \ref{se:basic} we give the basic definitions
of the \emph{topological information theory}, and obtain a relevant inequality relating
$\varepsilon$-entropy and $\varepsilon$-capacity. In Sect. \ref{se:entropy}
we show how the regularization of Fredholm integral equations of the first kind,
obtained through truncation methods, can be reconsidered in the framework of the theory
of the covering of compacta. In the same section we establish relevant
inequalities for the $\varepsilon$-capacity and fix the truncation points
in expansion \eqref{6} in some significant cases. In Sect. \ref{se:strong}
we prove that the truncations obtained in Sect. \ref{se:entropy} lead to
regularized approximations. Section \ref{se:stability} is devoted to the analysis
of the stability estimates and of the related type of continuity (H\"{o}lder or logarithmic)
in the dependence of the solution on the data. In the same section
two remarkable examples are discussed. 
Finally, in Sect. \ref{se:conclusions} some conclusions are drawn.
 
\section{Basic definitions of topological information theory}
\label{se:basic}

Let $\cI$ be a nonvoid set in a metric space $Y$. We introduce the following definitions which have
been stated by Kolmogorov and Tihomirov \cite{Kolmogorov} (see also the book
of Lorentz \cite[Chapter 10]{Lorentz}).

\begin{definition}
A system $\gamma$ of sets $u_k\in Y$ is called an $\varepsilon$-\emph{covering} of the set $\cI$
if the diameter $d(u_k)$ of an arbitrary $u_k\in\gamma$ does not exceed $2\varepsilon$ and
$\cI\subseteq\cup_{u_k\in\gamma} u_k$.
\label{def:1} 
\end{definition}

\begin{definition}
A set $u\subseteq Y$ is called an $\varepsilon$-\emph{net} of the set $\cI$ if every point of the set $\cI$
is at a distance not exceeding $\varepsilon$ from some point of $u$.
\label{def:2}
\end{definition}

\begin{definition}
A set $u\subset Y$ is called $\varepsilon$-\emph{separated} if every pair of distinct points of $u$
are at a distance greater than $\varepsilon$ from each other.
\label{def:3}
\end{definition}

\noindent
Definition \ref{def:3} can be equivalently expressed as follows:
the points $x_1,\ldots,x_m$ of $\cI$ are called $\varepsilon$-\emph{distinguishable} if the
distance $\rho$ between each two of them exceeds $\varepsilon$, i.e.,
$\rho(x_i,x_k)>\varepsilon$ for all $i\neq k$.

\sku

\begin{remark}
(see \cite[Chapter 10]{Lorentz})
If $\{x_1,\ldots,x_p\}$ is an $\varepsilon$-net for $\cI$, then there is also an 
$\varepsilon$-covering of $\cI$ that consists of $p$ sets; for the $u_k$
we can take the closed balls $\cS_\varepsilon(x_k)$ with centers $x_k$ ($k=1,\ldots,p$) and radius $\varepsilon$:
$u_k=\cS_\varepsilon(x_k)\cap\cI$.
A standard theorem of topology \cite[p. 123]{Simmons} guarantees 
that each compact set $\cI$ contains a finite $\varepsilon$-net for each $\varepsilon>0$.
Hence there is also a finite $\varepsilon$-covering for each $\varepsilon>0$.
Moreover, a compact set $\cI$ can contain only finitely many $\varepsilon$-distinguishable points.
\label{rem:2}
\end{remark}

Following Kolmogorov-Tihomirov \cite{Kolmogorov} we introduce the following three functions which
characterize the \emph{massiveness} of the set $\cI$, which is supposed to be compact.

\begin{definition}
$N_\varepsilon(\cI)$ is the minimal number of sets in an $\varepsilon$-covering of $\cI$.
\label{def:4}
\end{definition}

\begin{definition}
$N_\varepsilon^Y(\cI)$ is the minimal number of points in an $\varepsilon$-net of $\cI$.
\label{def:5}
\end{definition}

\begin{definition}
$M_\varepsilon(\cI)$ is the maximal number of points in an $\varepsilon$-separated subset of $\cI$.
\label{def:6}
\end{definition}

For a given $\varepsilon>0$, the number $n$ of sets $u_k$ in a covering family depends on the family,
but the minimal value of $n$, i.e., $N_\varepsilon(\cI)\doteq\min n$ is an invariant of the set $\cI$
that depends only upon $\varepsilon$.
Similarly, the number $m$ of points in an $\varepsilon$-separated subset of $\cI$ depends on the choice
of points, but its maximum value $M_\varepsilon(\cI)\doteq\max\, m$ is an invariant of the set $\cI$ that depends 
only on $\varepsilon$.

Hereafter we shall focus on $N_\varepsilon(\cI)$ and $M_\varepsilon(\cI)$ for reasons 
which will appear clear below. Next we assign special notations for the logarithms to
the base $2$ of the functions defined above and, specifically, of the function
$N_\varepsilon(\cI)$ and $M_\varepsilon(\cI)$: 

\sku

\begin{itemize}
\item[(i)] $H_\varepsilon(\cI)\doteq\log_2 N_\varepsilon(\cI)$ is called the \emph{minimal
$\varepsilon$-entropy} of the set $\cI$, or simply the \emph{$\varepsilon$-entropy} of $\cI$.
\item[(ii)] $C_\varepsilon(\cI)\doteq\log_2 M_\varepsilon(\cI)$ is the
\emph{$\varepsilon$-capacity} of the set $\cI$.
\end{itemize}

\sku

Next we can state the following lemma.

\begin{lemma}
\label{lem:1}
For every compact set $\cI$ in the metric space $Y$ the following inequality holds:
\beq
H_\varepsilon(\cI) \leqslant C_\varepsilon(\cI).
\label{7}
\eeq
\end{lemma}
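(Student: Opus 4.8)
The plan is to establish the inequality $H_\varepsilon(\cI) \leqslant C_\varepsilon(\cI)$ by relating a maximal $\varepsilon$-separated subset to an $\varepsilon$-covering. The key observation is that a maximal $\varepsilon$-separated set is automatically an $\varepsilon$-net, and every $\varepsilon$-net generates an $\varepsilon$-covering of the same cardinality via Remark~\ref{rem:2}.

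First I would invoke Definition~\ref{def:6} to fix a maximal $\varepsilon$-separated subset $\{x_1,\ldots,x_M\}\subseteq\cI$, where $M=M_\varepsilon(\cI)$; this set is finite by the last sentence of Remark~\ref{rem:2}, since $\cI$ is compact. The crucial step is to argue that this set is an $\varepsilon$-net for $\cI$. Indeed, suppose for contradiction that some point $x\in\cI$ lies at distance greater than $\varepsilon$ from every $x_i$. Then the distance $\rho(x,x_i)>\varepsilon$ for all $i=1,\ldots,M$, so adjoining $x$ to the set would produce an $\varepsilon$-separated subset of $\cI$ with $M+1$ points, contradicting the maximality of $M$. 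Hence every point of $\cI$ is within distance $\varepsilon$ of some $x_i$, which is precisely Definition~\ref{def:2}: the set $\{x_1,\ldots,x_M\}$ is an $\varepsilon$-net.

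Next I would apply Remark~\ref{rem:2}, which guarantees that from an $\varepsilon$-net with $M$ points one can construct an $\varepsilon$-covering consisting of $M$ sets (taking the closed balls $\cS_\varepsilon(x_k)\cap\cI$ of radius $\varepsilon$, each of diameter at most $2\varepsilon$ as required by Definition~\ref{def:1}). Since $N_\varepsilon(\cI)$ is by Definition~\ref{def:4} the \emph{minimal} number of sets in any $\varepsilon$-covering, this exhibited covering yields $N_\varepsilon(\cI)\leqslant M=M_\varepsilon(\cI)$. Taking logarithms to the base $2$ and using the definitions of $H_\varepsilon$ and $C_\varepsilon$ gives $H_\varepsilon(\cI)=\log_2 N_\varepsilon(\cI)\leqslant\log_2 M_\varepsilon(\cI)=C_\varepsilon(\cI)$, as claimed.

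The argument is elementary, and I do not anticipate a genuine obstacle; the only point requiring care is the reduction from maximal separation to the net property, where the strict inequality in the definition of $\varepsilon$-separation (Definition~\ref{def:3}) must be handled correctly so that the adjoined point $x$ genuinely produces a larger $\varepsilon$-separated set, and the finiteness of $M$ (needed for the argument to be non-vacuous) must be drawn from compactness via Remark~\ref{rem:2}.
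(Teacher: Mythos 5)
Your proof is correct and follows essentially the same route as the paper's: a maximal $\varepsilon$-separated set is an $\varepsilon$-net (by the same maximality contradiction), the net yields an $\varepsilon$-covering via Remark~\ref{rem:2}, and minimality of $N_\varepsilon(\cI)$ plus taking $\log_2$ gives \eqref{7}. The only cosmetic difference is that the paper routes the comparison through the intermediate quantities $N_\varepsilon^{(\cI)}$ and $N_\varepsilon^{(Y)}$, which you collapse into the direct construction of the covering; nothing is lost.
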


\begin{proof}
In view of the relevance of inequality \eqref{7}, we rapidly sketch, for the convenience
of the reader, the proof of this lemma, which is due (up to slight modifications) to
Kolmogorov and Tihomirov \cite{Kolmogorov}. Let $\{x_1,\ldots,x_{M_\varepsilon(\cI)}\}$ be a maximal
$\varepsilon$-separated set in $\cI$ (see Definition \ref{def:3}). It is then an 
$\varepsilon$-net of $\cI$ since in the converse case there would be a point $x'\in\cI$
such that the distance $\rho(x',x_i)>\varepsilon$; this last statement would contradict
the maximality of $\{x_1,\ldots,x_{M_\varepsilon(\cI)}\}$. In view of the fact that
$x_i\in\cI$ by definition, we obtain $N_\varepsilon^{(\cI)}(\cI)\leqslant M_\varepsilon(\cI)$.
It is evident that every $\varepsilon$-net consisting of points of $\cI$ is also
an $\varepsilon$-net consisting of points of $Y\supset\cI$, that is
$N_\varepsilon^{(Y)}(\cI)\leqslant N_\varepsilon^{(\cI)}(\cI)$. But, as we
have seen above in the remark, every $\varepsilon$-net generates an $\varepsilon$-covering
from which it follows: $N_\varepsilon(\cI)\leqslant N_\varepsilon^{(Y)}(\cI)$. 
In conclusion, we obtain: $N_\varepsilon(\cI)\leqslant M_\varepsilon(\cI)$.
Taking logarithms to the base $2$ in the last inequality, we finally obtain formula \eqref{7}.
\end{proof}

\section{$\boldsymbol{\varepsilon}$-entropy and $\boldsymbol{\varepsilon}$-capacity of compact sets}
\label{se:entropy}

From \eqref{5} we can derive the following inequality:
\beq
\left\|Af-\og\right\|_{L^2(a,b)} \leqslant \varepsilon \qquad (\varepsilon = \mathrm{constant}),
\label{8}
\eeq
where $\varepsilon$ is a bound of the noise, i.e, $\|n\|_{L^2(a,b)} \leqslant \varepsilon$. 
But, in order to truncate expansion \eqref{6} (see the Introduction) we must necessarily impose on the solution
an a-priori global bound, which can be properly written as follows
\cite{Bertero1,Bertero2,Miller}:
\beq
\|Bf\|_\cZ \leqslant E \qquad (E = \mathrm{constant}).
\label{9}
\eeq
Formula \eqref{9} amounts to requiring that there exist a positive constant $E$, a space $\cZ$
(called \emph{constraint space}) and an operator $B$ (called \emph{constraint operator}) acting as follows,
$B:X\to\cZ$ ($X=L^2(a,b)$ is the solution space) such that inequality \eqref{9} is satisfied. 
Various choice are indeed possible for the space $\cZ$ and for the constraint operator $B$, the proper 
one being mainly dependent on the type of problem under consideration. For instance, $B$ can be an appropriate
differential operator and $\cZ$ a suitably chosen Sobolev space. However, in several applications
(see the examples given in Sect. \ref{se:stability}) we can choose an operator $B$ such that
$B^*B$ commutes with $A^*A$. Furthermore, we require that the eigenvalues of the operator $B^*B$
(denoted by $\beta_k^2$) satisfy the condition $\lim_{k\to\infty}\beta_k^2=+\infty$ 
(see, as a particularly evident example, the integral equation whose kernel is given by formula \eqref{20}).
In such a case the constraint space is simply $\cZ=L^2(a,b)$, and inequality \eqref{9} reads
\beq
\sum_{k=1}^\infty \beta_k^2 \left|f_k\right|^2 \leqslant E^2 \qquad \left(f_k=(f,\psi_k)\right).
\label{10}
\eeq
At this point we must add to the assumptions (A) and (B) the following third assumption:

\skd

\noindent
{\bf Assumption (C).} The two numbers $\varepsilon$ and $E$ must be \emph{permissible}, that is,
such that the set of functions $f$ which satisfy bounds \eqref{8} and \eqref{9} is not empty.

\skd

In view of condition \eqref{10} we are led to consider the ball
$\cU\equiv\cU_{L^2(a,b)}\doteq\{f\in L^2(a,b)\,:\,\sum_{k=1}^\infty\beta_k^2\,\left|f_k\right|^2 \leqslant E^2\}$
and, accordingly, the image in the data space $Y$ of this ball under the mapping of the 
compact operator $A$ (see \eqref{1}). It is useful reminding for what follows the well-known fact
that a bounded linear compact operator $A$
from $\cH_1$ into $\cH_2$ takes bounded sets in $\cH_1$ into subsets of compact sets
in $\cH_2$; moreover, given any weakly convergent sequence $\{x_n\}$ in $\cH_1$, then the sequence
$\{A\,x_n\}$ converges strongly in $\cH_2$.

We shall consider two illustrative cases. 
In the first case we take $B=\I$ ($\I$ being the identity operator). Obviously the operator
$\I$ is self-adjoint and the eigenvalues of $\I^*\I$ are equal to $1$
(i.e., $\forall k$, $\beta_k^2=1$). Further, we assume, without loss of generality,
that the constant $E$ in formula \eqref{10} is equal to $1$ (the generalization to a constant $E\neq 1$ is straightforward).
We are thus led to consider in the constraint space $\cZ = L^2(a,b)$ the unit ball
$\cU^{(1)}\doteq\{f\in L^2(a,b)\,:\, \sum_{k=1}^\infty|f_k|^2\leqslant 1\}$ (see \eqref{10}).
The operator $A$ maps the unit ball $\cU^{(1)}$
onto a compact ellipsoid in the range of $A$ whose semi-axes are the (positive) eigenvalues $\lambda_k$
of the operator $A$, that is, the ellipsoid \cite{Prosser}
\beq
\cE^{(1)}\doteq
\left\{g\in\mathrm{Range}(A)\,:\, \sum_{k=1}^\infty \frac{|(g,\psi_k)|^2}{\lambda_k^2} \leqslant 1\right\}.
\nonumber
\eeq
As a second case we consider in $\cZ = L^2(a,b)$ the constraint operator $B$
such that $B^*B$ has eigenvalues $\beta_k^2$ with $\lim_{k\to\infty}\beta_k^2=+\infty$ 
(for simplicity, we continue to assume $E=1$).
Inequality \eqref{10} then leads to consider the ball 
$\cU^{(2)}\doteq\{f\in L^2(a,b)\,:\,\sum_{k=1}^\infty \beta_k^2\,|f_k|^2\leqslant 1\}$,
which is mapped by the operator $A$ onto the compact ellipsoid
\beq
\cE^{(2)}\doteq
\left\{g\in\mathrm{Range}(A)\,:\, \sum_{k=1}^\infty 
\left|\frac{(g,\psi_k)}{(\lambda_k/\beta_k)}\right|^2 \leqslant 1\right\}.
\nonumber
\eeq

Now, we focus our attention on the ellipsoid $\cE^{(1)}$, which is obtained by mapping the unit ball 
$\cU^{(1)}$ through the compact operator $A$.
In view of constraint \eqref{9} we consider only the solutions $f\in \cU^{(1)}$
which are mapped onto the ellipsoid $\cE^{(1)}$.
But, in general, the actual data $\og$ ($\og=g+n$) do not belong to $\cE^{(1)}$. This restriction leads us
to introduce appropriate approximations $\{\og_{\cE_1}\}$ of the data $\{\og\}$, which, for instance, can be obtained 
through suitable truncations in a way such that the approximations $\og_{\cE_1}$
belong to $\cE^{(1)}$. This point will be illustrated explicitly in what follows (see, in particular,
point (i) of Lemma \ref{lem:2}, and formulae \eqref{soluz_1} and \eqref{appr}). 
Therefore, our main problem consists in estimating the maximum
number of \emph{distinguishable messages} (this notion will be defined below) 
that can be sent back to recover an approximation of
the true (unknown) solution (assumed to belong to the unit ball $\cU^{(1)}$). 
Using the language of communication theory, 
we call \emph{distinguishable messages} those elements of $\cE^{(1)}$ which are $\varepsilon$-separated, i.e., such that
$\|\og_{\cE_1}^{(i)}-\og_{\cE_1}^{(j)}\|_Y>\varepsilon$ ($i\neq j$, $\og_{\cE_1}^{(i)},\og_{\cE_1}^{(j)}\in\cE^{(1)}$).
The ensemble composed by the maximum number of distinguishable messages that can be sent back 
to recover an approximation of the unknown solution, constitutes the backward information flow.
We first obtain an estimate of $H_\varepsilon(\cE^{(1)})=\log_2 N_\varepsilon(\cE^{(1)})$
(where $N_\varepsilon(\cE^{(1)})$ is the minimal number of sets in an $\varepsilon$-covering
of $\cE^{(1)}$; see Definition \ref{def:4}). Next, through inequality \eqref{7} we obtain a lower bound
for the maximum number of distinguishable messages that can be sent back
to recover the unknown solution, i.e., the ensemble which constitutes the backward information flow.
More precisely, the maximum number of distinguishable (or, equivalently, $\varepsilon$-separated) messages, denoted by 
$M_\varepsilon(\cE^{(1)})$ (see Definition \ref{def:6}), is larger or equal to $2^{\log_2 H_\varepsilon(\cE^{(1)})}$
(see next Corollary \ref{cor:1}).

\begin{lemma}
\label{lem:2}
Let $f\in\cU^{(1)}$, that is, assume the a-priori global bound \eqref{10}
with $\beta_k^2=1$ and $E=1$.
Then the truncation point in expansion \eqref{6}, associated with an $\varepsilon$-covering of the
ellipsoid $\cE^{(1)}$, is given by the largest
integer $k$, denoted by $k_1=k_1(\varepsilon)$, such that $\lambda_k\geqslant\varepsilon$. 
An estimate of $H_\varepsilon(\cE^{(1)})$ is given by the following inequality:
\beq
H_\varepsilon(\cE^{(1)})\geqslant\sum_{k=1}^{k_1}\log_2\frac{\lambda_k}{\varepsilon}.
\nonumber
\eeq
\end{lemma}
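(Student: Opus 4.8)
The plan is to reduce the infinite-dimensional covering problem to a finite-dimensional volume estimate, by projecting onto those directions in which the ellipsoid is resolvable at scale $\varepsilon$. Recall that $\cE^{(1)}$ has semi-axes $\lambda_k$ along the orthonormal directions $\psi_k$. First I would single out the truncation point: since the semi-axis in the direction $\psi_k$ equals $\lambda_k$, the ellipsoid $\cE^{(1)}$ extends beyond $\varepsilon$ in the $k$-th direction precisely when $\lambda_k\geqslant\varepsilon$, whereas the directions with $\lambda_k<\varepsilon$ lie below the resolution scale and effectively collapse under an $\varepsilon$-covering. Hence the natural truncation retains exactly the components $k\leqslant k_1$, where $k_1=k_1(\varepsilon)$ is the largest integer with $\lambda_k\geqslant\varepsilon$ (finite, since $\lambda_k\to 0$; the statement is vacuous if no such $k$ exists). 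As the estimate below shows, this is also the only choice that makes every factor in the asserted bound satisfy $\lambda_k/\varepsilon\geqslant 1$, so that each term $\log_2(\lambda_k/\varepsilon)$ is nonnegative.

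For the entropy estimate, let $P$ denote the orthogonal projection of $Y=L^2(a,b)$ onto the $k_1$-dimensional subspace $\mathrm{span}\{\psi_1,\ldots,\psi_{k_1}\}$. Then $P$ is a contraction, so it cannot increase diameters; consequently, if $\{u_1,\ldots,u_N\}$ is any $\varepsilon$-covering of $\cE^{(1)}$ with $N=N_\varepsilon(\cE^{(1)})$, the images $\{P u_1,\ldots,P u_N\}$ each have diameter at most $2\varepsilon$ and cover $P\cE^{(1)}$. This yields $N_\varepsilon(\cE^{(1)})\geqslant N_\varepsilon(P\cE^{(1)})$, i.e. $H_\varepsilon(\cE^{(1)})\geqslant H_\varepsilon(P\cE^{(1)})$. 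Moreover, extending any $(g_1,\ldots,g_{k_1})$ by zero in the remaining coordinates shows that $P\cE^{(1)}$ is exactly the finite-dimensional ellipsoid $\{(g_1,\ldots,g_{k_1}):\sum_{k=1}^{k_1} g_k^2/\lambda_k^2\leqslant 1\}$ in $\mathbb{R}^{k_1}$, whose semi-axes are $\lambda_1,\ldots,\lambda_{k_1}$.

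I would then close the argument with a volume comparison in $\mathbb{R}^{k_1}$. Writing $V_{k_1}$ for the volume of the unit Euclidean ball, the ellipsoid $P\cE^{(1)}$ has volume $V_{k_1}\prod_{k=1}^{k_1}\lambda_k$, while every set of diameter at most $2\varepsilon$ has volume at most $V_{k_1}\varepsilon^{k_1}$ by the isodiametric inequality. Since the covering sets must cover $P\cE^{(1)}$, it follows that $N_\varepsilon(P\cE^{(1)})\cdot V_{k_1}\varepsilon^{k_1}\geqslant V_{k_1}\prod_{k=1}^{k_1}\lambda_k$, hence $N_\varepsilon(P\cE^{(1)})\geqslant\prod_{k=1}^{k_1}(\lambda_k/\varepsilon)$. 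Taking $\log_2$ and combining with the projection inequality gives $H_\varepsilon(\cE^{(1)})\geqslant\sum_{k=1}^{k_1}\log_2(\lambda_k/\varepsilon)$, as claimed. The main obstacle is precisely the passage from infinitely many to finitely many dimensions: the volume argument is meaningless in $L^2$, and the whole purpose of introducing $P$ (and thereby truncating at $k_1$) is to produce a finite-dimensional body of positive, computable volume while controlling the covering number from below through the contraction property. A minor technical point to verify is that one must invoke the isodiametric inequality to bound the volume of an arbitrary covering set, rather than the (false in general) claim that a set of diameter $2\varepsilon$ is contained in a ball of radius $\varepsilon$.
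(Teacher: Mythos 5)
Your proof is correct, and its skeleton is the same as the paper's: reduce to the $k_1$-dimensional subspace spanned by $\psi_1,\ldots,\psi_{k_1}$, with $k_1$ the largest $k$ such that $\lambda_k\geqslant\varepsilon$ (so that every retained semi-axis is at least the covering radius), and then run a volume comparison there to get $N_\varepsilon(\cE^{(1)})\geqslant\prod_{k=1}^{k_1}(\lambda_k/\varepsilon)$. The differences are in execution, and on both points your version is the more careful one. The paper takes the \emph{section} $\cE^{(1)}_k=\cE^{(1)}\cap Y_k$ and counts how many $\varepsilon$-balls of volume $\varepsilon^{k_1}\Omega_{k_1}$ are needed to cover it, leaving implicit (i) how an $\varepsilon$-covering of the full infinite-dimensional ellipsoid induces one of the finite-dimensional body, and (ii) the fact that the covering sets of Definition \ref{def:1} are arbitrary sets of diameter at most $2\varepsilon$, not balls of radius $\varepsilon$. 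You close both gaps: the orthogonal projection $P$ is a contraction, so it carries any $\varepsilon$-covering of $\cE^{(1)}$ to an $\varepsilon$-covering of $P\cE^{(1)}$, and your extension-by-zero argument correctly identifies $P\cE^{(1)}$ with the coordinate ellipsoid of semi-axes $\lambda_1,\ldots,\lambda_{k_1}$ (the section would have served equally well here, since intersecting each covering set with $Y_{k_1}$ also preserves the diameter bound); and the isodiametric inequality is indeed the right tool for (ii), since a set of diameter $2\varepsilon$ in $\mathbb{R}^{k_1}$ need not be contained in any ball of radius $\varepsilon$ (Jung's theorem only guarantees radius $\varepsilon\sqrt{2k_1/(k_1+1)}$), yet its volume is still at most $V_{k_1}\varepsilon^{k_1}$. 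In short: the paper's sketch buys brevity and the geometric picture; your projection-plus-isodiametric route buys a complete proof of the stated lower bound on $N_\varepsilon(\cE^{(1)})$, arriving at the same estimate for $H_\varepsilon(\cE^{(1)})$.
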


\begin{proof}
The image of the unit ball $\cU^{(1)}$ through the compact operator $A$ is the
ellipsoid $\cE^{(1)}$. Now, the formal series \eqref{6} is an expansion in terms of the
eigenfunctions $\{\psi_k\}_{k=1}^\infty$, which span the data space $Y$ (in this
case $Y\equiv L^2(a,b)$). Then the truncation procedure requires to consider
the intersection of the ellipsoid $\cE^{(1)}$ with the finite $k$-dimensional subspace $Y_k$ of $Y$ spanned 
by the first $k$ axes of $\cE^{(1)}$, i.e.: $\cE^{(1)}_k=\cE^{(1)} \cap Y_k$. The volume of $\cE^{(1)}_k$
is just $\prod_{n=1}^k\lambda_n$ times the volume $\Omega_k$ of the unit ball in $Y_k$. 
We now want to estimate how many balls of radius $\varepsilon$
are necessary for covering the ellipsoid $\cE^{(1)}_k$: the volume of such a ball is
$\varepsilon^k\Omega_k$, then we are forced to stop the integer $k$ at a value such that the semi-axes of the
ellipsoid $\cE^{(1)}_k$ are not smaller than the radius $\varepsilon$ of the balls. In view of the fact that 
the eigenvalues $\lambda_k$ (which coincide with the semi-axes of the ellipsoid $\cE^{(1)}$) are
a non-increasing sequence, we must take a finite subspace $Y_k$
whose dimension equals the largest integer $k$ (denoted by $k_1$) such that 
$\lambda_k\geqslant\varepsilon$. 
Now, since the volume of an $\varepsilon$-ball in $Y_{k_1}$ is given by 
$\varepsilon^{k_1}\Omega_{k_1}$, it follows that, in order to cover the ellipsoid $\cE^{(1)}$ 
by $\varepsilon$-balls, we need at least $\prod_{k=1}^{k_1}(\lambda_k/\varepsilon)$
such balls (see also Refs. \cite{Gelfand,Prosser}). In conclusion, it follows that
\begin{itemize}
\item[(i)] The truncation point of the formal expansion \eqref{6}, associated with an
$\varepsilon$-covering of the ellipsoid $\cE^{(1)}$ 
must be stopped at the largest integer $k$ (denoted by $k_1$) such that $\lambda_k\geqslant\varepsilon$.  
\item[(ii)] An estimate of the minimal number of sets in an $\varepsilon$-covering of $\cE^{(1)}$
is given by $N_\varepsilon(\cE^{(1)})\geqslant\prod_{k=1}^{k_1}(\lambda_k/\varepsilon)$ and, accordingly
\beq
H_\varepsilon(\cE^{(1)})\geqslant\sum_{k=1}^{k_1}\log_2\frac{\lambda_k}{\varepsilon}. \nonumber
\label{12}
\eeq
\end{itemize}
\end{proof}

\begin{corollary}
\label{cor:1}
Assume the conditions of Lemma \ref{lem:2}. Then the maximum number of distinguishable
messages (i.e., $\|\og^{(i)}_{\cE_1}-\og^{(j)}_{\cE_1}\|_Y>\varepsilon$ with $i\neq j$) which can be sent back from the
data set to recover the unknown function $f\in\cU^{(1)}$ is at least
\beq
M_\varepsilon(\cE^{(1)})\geqslant 2^{\left[\sum_{k=1}^{k_1}\log_2(\lambda_k/\varepsilon)\right]}. \nonumber
\label{12bis}
\eeq
\end{corollary}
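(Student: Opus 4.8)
The plan is to obtain the bound by simply chaining the two lemmas already at our disposal, using only the definitions of $\varepsilon$-entropy and $\varepsilon$-capacity. The first step is an identification: the corollary's \emph{distinguishable messages} are precisely the points of an $\varepsilon$-separated subset of the ellipsoid $\cE^{(1)}$ in the sense of Definition \ref{def:3}, since the condition $\|\og^{(i)}_{\cE_1}-\og^{(j)}_{\cE_1}\|_Y>\varepsilon$ for $i\neq j$ is exactly the defining property of $\varepsilon$-separation. Consequently, by Definition \ref{def:6}, the maximum number of distinguishable messages is nothing but $M_\varepsilon(\cE^{(1)})$, and the statement to be proved is a lower bound for this single quantity.

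Next I would invoke Lemma \ref{lem:1} for the set $\cI=\cE^{(1)}$. This is legitimate because $\cE^{(1)}$ was already identified as the image of the unit ball $\cU^{(1)}$ under the compact operator $A$, hence a compact ellipsoid in the range of $A$, so that the sole hypothesis of Lemma \ref{lem:1}, namely that $\cI$ be a compact set in the metric space $Y=L^2(a,b)$, is met. Applying the lemma and recalling items (i)--(ii) of the definitions of the logarithmic functions gives $H_\varepsilon(\cE^{(1)})\leqslant C_\varepsilon(\cE^{(1)})$, that is $\log_2 N_\varepsilon(\cE^{(1)})\leqslant\log_2 M_\varepsilon(\cE^{(1)})$.

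The core of the argument is then a short chain of inequalities. Combining the lower bound on the $\varepsilon$-entropy furnished by Lemma \ref{lem:2}, namely $H_\varepsilon(\cE^{(1)})\geqslant\sum_{k=1}^{k_1}\log_2(\lambda_k/\varepsilon)$, with the inequality $H_\varepsilon(\cE^{(1)})\leqslant C_\varepsilon(\cE^{(1)})=\log_2 M_\varepsilon(\cE^{(1)})$ just obtained, one arrives at
\beq
\log_2 M_\varepsilon(\cE^{(1)})\geqslant\sum_{k=1}^{k_1}\log_2\frac{\lambda_k}{\varepsilon}. \nonumber
\eeq
Since $t\mapsto 2^t$ is strictly increasing and $M_\varepsilon(\cE^{(1)})=2^{\log_2 M_\varepsilon(\cE^{(1)})}$, I would finish by exponentiating both sides to base $2$, which delivers exactly the claimed bound.

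I do not expect any genuine obstacle here, as the two substantive estimates (Lemma \ref{lem:1} and Lemma \ref{lem:2}) are already established; the only points requiring care are the identification of the notion of distinguishable messages with the $\varepsilon$-separated subsets counted by $M_\varepsilon$, and the (already granted) verification that $\cE^{(1)}$ satisfies the compactness hypothesis of Lemma \ref{lem:1}. Everything else reduces to the monotone exponentiation of an inequality between logarithms.
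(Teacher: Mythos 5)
Your proposal is correct and takes essentially the same route as the paper, whose proof simply combines Lemma \ref{lem:2} with inequality \eqref{7} ($H_\varepsilon(\cE^{(1)})\leqslant C_\varepsilon(\cE^{(1)})$) and exponentiates to base $2$. The extra care you take---identifying the distinguishable messages with $\varepsilon$-separated points in the sense of Definitions \ref{def:3} and \ref{def:6}, and verifying that $\cE^{(1)}$ meets the compactness hypothesis of Lemma \ref{lem:1}---just makes explicit what the paper leaves implicit.
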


\begin{proof}
The statement of the corollary is an immediate consequence of Lemma \ref{lem:2}
and of inequality \eqref{7}.
\end{proof}

\skd 

The proof of Lemma \ref{lem:2} shows that if the a-priori bound \eqref{10} holds with $\beta_k^2=1$
and $E=1$, then the maximum number of Fourier components in the 
truncated version of the formal expansion \eqref{6} is given necessarily by $k=k_1$. 
This means that, given the data $\og\in Y$,
the approximation of $f\in\cU^{(1)}$ is given by the function
\beq 
f_1(x)=\sum_{k=1}^{k_1(\varepsilon)}\frac{\og_k}{\lambda_k}\psi_k(x)
\qquad (x\in[a,b], \og_k=(\og,\psi_k)).
\label{soluz_1}
\eeq
At this point the approximation $\og_{\cE_1}$ of $\og$ can be made more transparent: we can 
write the solution as $f_1=A^{-1}\og_{\cE_1}$, the components of the approximation $\og_{\cE_1}$ being defined by
\beq
(\og_{\cE_1})_k \doteq (\og_{\cE_1},\psi_k) =
\begin{cases}
\og_k & \mathrm{for}\quad 1 \leqslant k \leqslant k_1, \\
0     & \mathrm{for}\quad k > k_1.
\end{cases}
\label{appr}
\eeq

The extension of the results of Lemma \ref{lem:2} to the case of a more general constraint operator $B$ 
is given by the following lemma.
 
\begin{lemma}
\label{lem:4}
Let $f\in\cU^{(2)}$, that is, assume the a-priori global bound \eqref{10} with $E=1$.
In addition, assume $\lim_{k\to+\infty}\beta_k^2=+\infty$.
Then:
\begin{itemize}
\item[(i)] The truncation point in expansion \eqref{6}, associated with an $\varepsilon$-covering
of the ellipsoid $\cE^{(2)}$, is given by the largest integer $k$, denoted by $k_2(\varepsilon)$, such that 
$\lambda_k\geqslant\beta_k\,\varepsilon$. Correspondingly, given the data $\og\in Y$,
the approximation of the solution $f\in\cU^{(2)}$ is given by 
\beq
f_2(x)=\sum_{k=1}^{k_2(\varepsilon)}\frac{\og_k}{\lambda_k}\psi_k(x) \qquad (x\in[a,b]). \nonumber
\label{nn}
\eeq
\item[(ii)] The maximum number of distinguishable messages (that is, the elements $\og_{\cE_2}^{(i)},\og_{\cE_2}^{(j)}\in\cE^{(2)}$
such that $\|\og_{\cE_2}^{(i)}-\og_{\cE_2}^{(j)}\|_Y>\varepsilon$, $i\neq j$) 
which can be sent back from the data set to recover the function $f\in\cU^{(2)}$ is $M_\varepsilon(\cE^{(2)})$, 
which satisfies the following inequality: 
$M_\varepsilon(\cE^{(2)})\geqslant 2^{[\sum_{k=1}^{k_2}\log_2(\lambda_k/\varepsilon)]}$.
\end{itemize}
\end{lemma}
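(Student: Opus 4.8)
The plan is to reduce everything to Lemma \ref{lem:2} by observing that $\cE^{(2)}$ is again a compact ellipsoid, differing from $\cE^{(1)}$ only in its semi-axes. Since $B^*B$ commutes with $A^*A$, the two operators share the eigenbasis $\{\psi_k\}$, and one checks directly that $A\,\cU^{(2)}=\cE^{(2)}$ is the ellipsoid in $Y$ whose $k$-th semi-axis, measured in the $Y$-norm, is $\sigma_k\doteq\lambda_k/\beta_k$ rather than $\lambda_k$: writing $g=Af$ gives $(g,\psi_k)=\lambda_k f_k$, so the constraint $\sum_k\beta_k^2|f_k|^2\leqslant 1$ becomes $\sum_k |(g,\psi_k)|^2/(\lambda_k/\beta_k)^2\leqslant 1$. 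First I would record that $\sigma_k\to 0$ as $k\to\infty$, which follows at once from $\lambda_k\to 0$ (Hilbert--Schmidt) together with the hypothesis $\beta_k^2\to+\infty$. Hence, exactly as for $\cE^{(1)}$, only finitely many semi-axes exceed any fixed $\varepsilon>0$, so $\cE^{(2)}$ is compact and the truncation is well defined.

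For part (i), I would repeat the volume comparison of Lemma \ref{lem:2} with $\lambda_k$ replaced by $\sigma_k=\lambda_k/\beta_k$. Intersecting $\cE^{(2)}$ with the $k$-dimensional subspace $Y_k$ spanned by its first $k$ axes yields a finite-dimensional ellipsoid $\cE^{(2)}_k$ of volume $\bigl(\prod_{n=1}^k\sigma_n\bigr)\Omega_k$, whereas each covering ball of radius $\varepsilon$ has volume $\varepsilon^k\Omega_k$. Since a ball shorter than a semi-axis cannot cover that direction, one must stop at the largest $k$ for which $\sigma_k\geqslant\varepsilon$, i.e. $\lambda_k\geqslant\beta_k\,\varepsilon$; this is the integer $k_2(\varepsilon)$. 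Retaining precisely these first $k_2$ Fourier components gives the truncated data $\og_{\cE_2}$ with $(\og_{\cE_2})_k=\og_k$ for $k\leqslant k_2$ and $0$ otherwise, in complete analogy with \eqref{appr}, and the recovered solution is $f_2=A^{-1}\og_{\cE_2}=\sum_{k=1}^{k_2}(\og_k/\lambda_k)\psi_k$, as in \eqref{soluz_1}.

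For part (ii), the same volume count furnishes the lower bound on the covering number, $N_\varepsilon(\cE^{(2)})\geqslant\prod_{k=1}^{k_2}(\sigma_k/\varepsilon)$, whence $H_\varepsilon(\cE^{(2)})\geqslant\sum_{k=1}^{k_2}\log_2(\lambda_k/(\beta_k\varepsilon))$. I would then invoke Lemma \ref{lem:1} in the form $M_\varepsilon(\cE^{(2)})\geqslant N_\varepsilon(\cE^{(2)})$ (equivalently $C_\varepsilon\geqslant H_\varepsilon$) to pass from this $\varepsilon$-entropy estimate to the claimed lower bound on the number of $\varepsilon$-distinguishable messages, exactly as Corollary \ref{cor:1} is deduced from Lemma \ref{lem:2}.

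The one genuinely new point---and the step I expect to be the main obstacle---is the ordering of the semi-axes. In Lemma \ref{lem:2} the $\lambda_k$ are non-increasing by hypothesis, so ``stop at the largest $k$ with $\lambda_k\geqslant\varepsilon$'' automatically retains exactly the axes not shorter than $\varepsilon$. For $\cE^{(2)}$ the sequence $\sigma_k=\lambda_k/\beta_k$ need not be monotone, since $\lambda_k$ decreases while $\beta_k$ increases. To make the truncation at $k_2$ capture precisely the long axes I would either read off from the applications the mild assumption that $\lambda_k/\beta_k$ is non-increasing, or else relabel the eigenfunctions so that $\{\sigma_k\}$ is non-increasing; because $\sigma_k\to 0$, only finitely many axes exceed $\varepsilon$ in any case, so such a reordering is harmless and the volume product $\prod_{k\le k_2}\sigma_k$ is unaffected. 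Once this is settled, the argument is identical to that of Lemma \ref{lem:2}.
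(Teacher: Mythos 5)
Your part (i) follows exactly the route the paper intends: the paper gives no separate proof of Lemma \ref{lem:4}, presenting it as the direct extension of Lemma \ref{lem:2}, and your identification of $A\,\cU^{(2)}=\cE^{(2)}$ as the ellipsoid with semi-axes $\sigma_k=\lambda_k/\beta_k$, the compactness observation via $\sigma_k\to 0$, and the volume comparison yielding the stopping rule $\lambda_k\geqslant\beta_k\varepsilon$ all match the intended argument. Your caveat about the ordering of the $\sigma_k$ is a genuine and well-spotted point: the paper tacitly assumes $\lambda_k/\beta_k$ non-increasing (the same tacit assumption is used in the proof of Lemma \ref{lem:6}, where $\lambda_k\geqslant(\varepsilon/E)\beta_k$ is invoked for \emph{all} $k\leqslant k_2$, which only monotonicity of $\sigma_k$ guarantees), and it does hold in the paper's examples ($\beta_k=k\pi$ with $\lambda_k=(k\pi)^{-2}$, and the prolate spheroidal case).

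The genuine gap is in your part (ii). Your volume count gives $N_\varepsilon(\cE^{(2)})\geqslant\prod_{k=1}^{k_2}(\sigma_k/\varepsilon)$ and hence, via Lemma \ref{lem:1}, $M_\varepsilon(\cE^{(2)})\geqslant 2^{\left[\sum_{k=1}^{k_2}\log_2(\lambda_k/(\beta_k\varepsilon))\right]}$, whereas the lemma asserts $M_\varepsilon(\cE^{(2)})\geqslant 2^{\left[\sum_{k=1}^{k_2}\log_2(\lambda_k/\varepsilon)\right]}$, with no $\beta_k$ in the denominator. The exponents differ by $\sum_{k=1}^{k_2}\log_2\beta_k$, and since $\beta_k^2\to+\infty$ (so $\beta_k>1$ for all but finitely many $k$) the printed bound is strictly stronger than what you derive; your sentence ``to pass from this $\varepsilon$-entropy estimate to the claimed lower bound'' is therefore a non sequitur. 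Nor can the printed bound be rescued by a sharper packing argument for $\cE^{(2)}$ itself: since $M_\varepsilon(\cE^{(2)})\leqslant N_{\varepsilon/2}(\cE^{(2)})$ and covering numbers of an ellipsoid are controlled by products of its semi-axes over $\varepsilon$, for rapidly growing $\beta_k$ (e.g. $\lambda_k=e^{-k}$, $\beta_k=e^{k}$) the inequality as printed actually fails for $M_\varepsilon(\cE^{(2)})$ read literally. So you should either state the corrected bound $M_\varepsilon(\cE^{(2)})\geqslant 2^{\left[\sum_{k=1}^{k_2}\log_2(\lambda_k/(\beta_k\varepsilon))\right]}$, or observe that the paper's exponent is the covering count for the $k_2$-dimensional section $\cE^{(1)}\cap Y_{k_2}$ (semi-axes $\lambda_k\geqslant\sigma_k\geqslant\varepsilon$ for $k\leqslant k_2$), i.e. for truncated data constrained only by the ball bound; that reading is the one consistent with Sect. \ref{se:stability}, where the two message counts $2^{[\sum_{k\leqslant k_1}\log_2(\lambda_k/\varepsilon)]}$ and $2^{[\sum_{k\leqslant k_2}\log_2(\lambda_k/\varepsilon)]}$ differ only in the truncation index. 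Either way, the discrepancy must be addressed explicitly rather than elided.
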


It follows from their definition that the value of the truncation points $k_i$ ($i=1,2$) is strictly related to how
the eigenvalues $\{\lambda_k\}_{k=1}^\infty$ vary with $k$. Hille and Tamarkin
\cite{Hille} have systematically investigated the distribution of these eigenvalues on the basis of the general
regularity properties of the kernel $K(x,y)$ such as integrability, continuity, differentiability,
analyticity and the like. A very illuminating summary of all their results is given 
in Sect. 12 of \cite{Hille}. We limit ourselves to present two interesting examples, which will
be investigated in detail in Sect. \ref{se:stability}. As a first example we study the integral equation \eqref{1}
with the kernel
\beq
K(x,y) = 
\begin{cases}
(1-x)\,y & \quad\text{if \ $0\leqslant y\leqslant x\leqslant 1$}, \\
x\,(1-y) & \quad\text{if \ $0\leqslant x\leqslant y\leqslant 1$}, 
\end{cases}
\label{20}
\eeq
to which there correspond the eigenfunctions $\psi_k(x)=\sqrt{2}\sin (k\pi x)$
and the eigenvalues $\lambda_k=(k\pi)^{-2}$ ($k=1,2,\ldots$). \\
In the second example we consider the kernel
\beq
K(x,y) = \frac{\sin[c(x-y)]}{\pi(x-y)} \qquad (c=\mathrm{constant}),
\label{22}
\eeq
the asymptotic behavior for $k\to\infty$ of the corresponding eigenvalues being given by
$\lambda_k=O\{\frac{1}{k}\exp[-2k\ln(\frac{k}{ec})]\}$ \cite{Landau}.

\skd

Roughly speaking, we can say that in general the truncation point $k_i$ ($i=1,2$)
increases as the smoothness of the kernel decreases. Accordingly, the maximum
number of messages which can be sent back from the data set for recovering the solution
decreases as the smoothness of the kernel increases passing from the continuity
to the differentiability up to the analyticity.  Let us, however, remark that this criterion
must be taken with great caution: indeed, the Hille-Tamarkin results
refer essentially to the asymptotic behavior of the eigenvalues. For instance, 
in the case of the kernel given in \eqref{22}, the eigenvalues $\lambda_k$ are nearly constant
up to a certain value $k_*$ of $k$ and then decrease very rapidly for $k>k_*$.
It follows therefore that it is not admissible to keep a rigid conclusion
looking only at the asymptotic behavior of the eigenvalues.

\section{Strong and weak convergence of the truncated expansions}
\label{se:strong}

We are now in the position to study the type of convergence associated with the 
approximations $f_1(x)$ (see Lemma \ref{lem:2}) and $f_2(x)$ (see Lemma \ref{lem:4}).
First we prove the strong convergence in the $L^2$-norm of the approximation 
$f_2(x)$ and, successively, the weak convergence of the approximation $f_1(x)$
(both in the general case with $E\neq 1$).
To this end, we need the following auxiliary lemma.

\begin{lemma}
\label{lem:6}
For any function $f$ which satisfies bounds \eqref{8} and \eqref{9}, the following
inequalities hold:
\begin{subequations}
\begin{align}
& \|A(f-f_2)\|_Y\leqslant \sqrt{2}\,\varepsilon, \label{23.a} \\
& \|B(f-f_2)\|_\cZ\leqslant \sqrt{2}\,E, \label{23.b} \\
& \|A(f-f_2)\|_Y^2 + \left(\frac{\varepsilon}{E}\right)^2
\|B(f-f_2)\|_\cZ^2\leqslant 4\,\varepsilon^2, \label{23.c}
\end{align}
\label{23}
\end{subequations}
where $X\equiv Y\equiv \cZ\equiv L^2(a,b)$.
\end{lemma}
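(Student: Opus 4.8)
The plan is to diagonalise everything in the orthonormal eigenbasis $\{\psi_k\}$, in which $A$, $B^*B$ and the truncation defining $f_2$ all act coordinatewise. Writing $f_k=(f,\psi_k)$ and $\og_k=(\og,\psi_k)$, I would first record that $f_2$ has components $(f_2)_k=\og_k/\lambda_k$ for $k\leqslant k_2$ and $(f_2)_k=0$ for $k>k_2$, so that
\[
[A(f-f_2)]_k=
\begin{cases}
\lambda_k f_k-\og_k & (k\leqslant k_2),\\
\lambda_k f_k & (k>k_2),
\end{cases}
\]
while $\|B(f-f_2)\|_\cZ^2=\sum_k\beta_k^2\,|(f-f_2)_k|^2$, the latter because $B^*B$ commutes with $A^*A$ and is therefore diagonal in $\{\psi_k\}$ with eigenvalues $\beta_k^2$. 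In these coordinates the two hypotheses read $\sum_k|\lambda_k f_k-\og_k|^2\leqslant\varepsilon^2$ (from \eqref{8}) and $\sum_k\beta_k^2|f_k|^2\leqslant E^2$ (from \eqref{9}--\eqref{10}). The decisive structural fact is that the truncation index $k_2$ is characterised, in the general-$E$ case, by $\lambda_k/\beta_k\geqslant\varepsilon/E$ for $k\leqslant k_2$ and $\lambda_k/\beta_k<\varepsilon/E$ for $k>k_2$; equivalently, $\beta_k^2/\lambda_k^2\leqslant E^2/\varepsilon^2$ on the retained modes and $\lambda_k^2/\beta_k^2<\varepsilon^2/E^2$ on the discarded ones.

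Next I would prove \eqref{23.a} and \eqref{23.b} by splitting each squared norm at $k_2$. For \eqref{23.a}, the head $\sum_{k\leqslant k_2}|\lambda_k f_k-\og_k|^2$ is at most $\varepsilon^2$ by the noise bound, while the tail $\sum_{k>k_2}\lambda_k^2|f_k|^2=\sum_{k>k_2}(\lambda_k^2/\beta_k^2)\,\beta_k^2|f_k|^2$ is at most $(\varepsilon/E)^2\sum_{k>k_2}\beta_k^2|f_k|^2\leqslant\varepsilon^2$ by the leakage inequality on discarded modes together with the constraint bound; the sum $2\varepsilon^2$ yields the factor $\sqrt{2}$. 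The proof of \eqref{23.b} is the mirror image: the tail $\sum_{k>k_2}\beta_k^2|f_k|^2\leqslant E^2$ directly, whereas the head $\sum_{k\leqslant k_2}(\beta_k^2/\lambda_k^2)|\lambda_k f_k-\og_k|^2$ is bounded by $(E/\varepsilon)^2\cdot\varepsilon^2=E^2$ using the amplification inequality on retained modes and the noise bound, giving $2E^2$ and hence $\sqrt{2}\,E$.

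Finally, \eqref{23.c} follows by combining the two expansions term by term rather than estimating them separately. On the retained modes the coefficient of $|\lambda_k f_k-\og_k|^2$ is $1+(\varepsilon/E)^2\beta_k^2/\lambda_k^2$, which is at most $2$ by the amplification inequality, so this block contributes at most $2\varepsilon^2$; on the discarded modes the coefficient of $|f_k|^2$ is $\lambda_k^2+(\varepsilon/E)^2\beta_k^2\leqslant 2(\varepsilon/E)^2\beta_k^2$ by the leakage inequality, so this block contributes at most $2(\varepsilon/E)^2E^2=2\varepsilon^2$, for a total of $4\varepsilon^2$. There is no genuine analytic obstacle: the argument is pure bookkeeping once $f_2$ is written in coordinates. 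The one point requiring care is fixing the correct general-$E$ threshold for $k_2$ and observing that this single inequality simultaneously delivers the amplification bound on the retained modes and the leakage bound on the discarded ones — precisely what makes all three estimates close with the clean constants $\sqrt{2}$, $\sqrt{2}$ and $4$.
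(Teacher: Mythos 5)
Your proposal is correct and follows essentially the same route as the paper: diagonalize in the eigenbasis, split each squared norm at the truncation index $k_2$, and use the single threshold inequality $\lambda_k\geqslant(\varepsilon/E)\beta_k$ (for $k\leqslant k_2$) both as the amplification bound on retained modes and, in its negated form, as the leakage bound on discarded ones. The only difference is cosmetic: for \eqref{23.c} you recombine the coefficients term by term, whereas the paper simply adds the already-proved bounds \eqref{23.a} and \eqref{23.b} in the weighted form $2\varepsilon^2+(\varepsilon/E)^2\cdot 2E^2=4\varepsilon^2$ — both give the same constant.
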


\begin{proof}
In view of bounds \eqref{8} and \eqref{9} and of the fact that the truncation point $k_2=k_2(\varepsilon,E)$ in the 
approximation $f_2(x)$ is given by the largest integer such that $\lambda_k\geqslant(\frac{\varepsilon}{E})\beta_k$, 
we have
\beq
\sum_{k=k_2+1}^\infty\lambda_k^2\,|f_k|^2 \leqslant 
\left(\frac{\varepsilon}{E}\right)^2\sum_{k=k_2+1}^\infty\beta_k^2\,|f_k|^2 \leqslant\varepsilon^2. 
\label{24}
\eeq
Therefore, taking into account inequalities \eqref{8} and \eqref{24}, we get
\beq
\begin{split}
\|A(f-f_2)\|^2_Y &= \sum_{k=1}^{k_2}\lambda_k^2\left|f_k-\frac{\og_k}{\lambda_k}\right|^2
+\sum_{k=k_2+1}^\infty\lambda_k^2\,|f_k|^2 \\
&\leqslant\|Af-\og\|^2_Y+\sum_{k=k_2+1}^\infty\lambda_k^2\,|f_k|^2\leqslant 2\varepsilon^2,
\label{25}
\end{split}
\eeq
and inequality \eqref{23.a} is proved. Analogously, in view of inequality \eqref{8}
and of the fact that $\lambda_k\geqslant(\frac{\varepsilon}{E})\beta_k$ for $k<k_2(\varepsilon,E)$, we have
\beq
\sum_{k=1}^{k_2}\beta_k^2\left|f_k-\frac{\og_k}{\lambda_k}\right|^2 \leqslant
\left(\frac{E}{\varepsilon}\right)^2 \sum_{k=1}^{k_2}\lambda_k^2\left|f_k-\frac{\og_k}{\lambda_k}\right|^2
\leqslant\left(\frac{E}{\varepsilon}\right)^2\|Af-\og\|^2_Y \leqslant E^2.
\label{26}
\eeq
Next, inequalities \eqref{9} and \eqref{26} allow us to write
\beq
\|B(f-f_2)\|^2_\cZ=
\sum_{k=1}^{k_2}\beta_k^2\left|f_k-\frac{\og_k}{\lambda_k}\right|^2 
+\sum_{k=k_2+1}^\infty\beta_k^2 \, |f_k|^2 \leqslant 2E^2, 
\label{27}
\eeq
and inequality \eqref{23.b} follows. Finally, from \eqref{25} and \eqref{27} inequality
\eqref{23.c} follows immediately.
\end{proof}

Next, we can prove the following theorem.

\begin{theorem}
\label{the:1}
Let $f$ satisfies bounds \eqref{8} and \eqref{9} and assume that the
eigenvalues $\beta_k^2$ of $B^*B$ satisfy the condition: $\lim_{k\to\infty}\beta_k^2=+\infty$.
Then the following limit holds:
\beq
\lim_{\varepsilon\to 0}\|f-f_2\|_{L^2(a,b)}=0.
\label{28}
\eeq
\end{theorem}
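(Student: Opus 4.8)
The plan is to set $h\doteq f-f_2$ and expand it in the orthonormal eigenbasis, $h=\sum_{k=1}^\infty h_k\psi_k$ with $h_k=(f-f_2,\psi_k)$. Since $A\psi_k=\lambda_k\psi_k$ and $B^*B\psi_k=\beta_k^2\psi_k$, the two estimates \eqref{23.a} and \eqref{23.b} of Lemma \ref{lem:6} translate into the coefficient inequalities
\[
\sum_{k=1}^\infty \lambda_k^2\,|h_k|^2=\|A(f-f_2)\|_Y^2\leqslant 2\varepsilon^2,
\qquad
\sum_{k=1}^\infty \beta_k^2\,|h_k|^2=\|B(f-f_2)\|_\cZ^2\leqslant 2E^2,
\]
whereas the quantity to be driven to zero is $\|f-f_2\|_{L^2(a,b)}^2=\sum_{k=1}^\infty|h_k|^2$. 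The decisive structural feature is the asymmetry between these two bounds: the second one is \emph{uniform} in $\varepsilon$, the constant $E$ being held fixed, while the first one itself tends to zero as $\varepsilon\to 0$.

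The idea is then a two-scale splitting of the series at a threshold index $N$, chosen to match a prescribed tolerance $\eta>0$. For the high-frequency tail I would invoke the hypothesis $\lim_{k\to\infty}\beta_k^2=+\infty$: the $\beta_k^2$ eventually exceed any given level, so one may fix $N=N(\eta)$ so large that $\beta_k^2\geqslant 4E^2/\eta$ for all $k>N$, whence
\[
\sum_{k=N+1}^\infty|h_k|^2\leqslant\frac{1}{\inf_{k>N}\beta_k^2}\sum_{k=N+1}^\infty\beta_k^2\,|h_k|^2\leqslant\frac{\eta}{4E^2}\cdot 2E^2=\frac{\eta}{2}.
\]
Crucially, this bound holds for \emph{every} $\varepsilon$, since it relies only on the $\varepsilon$-independent constraint $\|B(f-f_2)\|_\cZ^2\leqslant 2E^2$. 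With $N$ now frozen, the finitely many low-frequency terms are controlled through the monotonicity $\lambda_1\geqslant\cdots\geqslant\lambda_N>0$ of the eigenvalues:
\[
\sum_{k=1}^N|h_k|^2\leqslant\frac{1}{\lambda_N^2}\sum_{k=1}^N\lambda_k^2\,|h_k|^2\leqslant\frac{2\varepsilon^2}{\lambda_N^2}.
\]
Because $N$, and hence $\lambda_N$, depends on $\eta$ alone, there is an $\varepsilon_0>0$ for which the right-hand side is below $\eta/2$ as soon as $\varepsilon<\varepsilon_0$. Summing the two contributions gives $\|f-f_2\|_{L^2(a,b)}^2<\eta$ for every $\varepsilon<\varepsilon_0$, and since $\eta$ is arbitrary the limit \eqref{28} follows.

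The point on which the whole argument turns---and the step I expect to be the genuine obstacle rather than a routine estimate---is the order of the quantifiers: one must first select $N$ from the $\varepsilon$-independent constraint bound so as to make the tail small, and only afterwards let $\varepsilon\to 0$ to annihilate the (by then finite) head. The hypothesis $\lim_{k\to\infty}\beta_k^2=+\infty$ enters precisely here, as it is what makes the tail uniformly small in $\varepsilon$; in its absence---for instance in the case $\cE^{(1)}$ with $\beta_k^2\equiv 1$---the high-frequency coefficients need not be uniformly summable and one can only hope for weak convergence of $f_1$. Beyond the two coefficient inequalities furnished by Lemma \ref{lem:6}, no compactness argument or additional property of $A$ is required.
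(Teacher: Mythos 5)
Your proof is correct, but it takes a genuinely different route from the paper's. You use the two inequalities \eqref{23.a} and \eqref{23.b} of Lemma \ref{lem:6} \emph{separately}: a two-scale splitting of $\sum_k|h_k|^2$ at a threshold $N(\eta)$ chosen from the $\varepsilon$-independent bound $\sum_k\beta_k^2|h_k|^2\leqslant 2E^2$ so that the tail is small uniformly in $\varepsilon$, after which $\varepsilon\to 0$ annihilates the finite head via $\sum_{k\leqslant N}|h_k|^2\leqslant 2\varepsilon^2/\lambda_N^2$ (legitimate, since $\lambda_1\geqslant\cdots\geqslant\lambda_N>0$ by the paper's positivity and monotonicity assumptions on the eigenvalues). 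The paper instead works with the \emph{combined} inequality \eqref{23.c}: it introduces the self-adjoint operator $C=A^*A+(\varepsilon/E)^2B^*B$ with eigenvalues $\gamma_k^2=\lambda_k^2+(\varepsilon/E)^2\beta_k^2$, bounds $\|f-f_2\|\leqslant 2\varepsilon\,\sup_k\gamma_k^{-1}$, and then argues that the index $k_0(\varepsilon,E)$ of the minimal eigenvalue of $C$ tends to infinity as $\varepsilon\to 0$, whence $\|f-f_2\|\leqslant 2\varepsilon\,\gamma_{k_0}^{-1}\leqslant 2E\,\beta_{k_0}^{-1}\to 0$. What your argument buys: it is more elementary --- no need to locate the minimizing index or to justify $k_0(\varepsilon,E)\to\infty$, a step the paper treats rather briskly --- and it makes transparent the quantifier order you flag (first $N$ from the uniform constraint bound, then $\varepsilon\to 0$), which is exactly where $\beta_k^2\to+\infty$ enters; your observation that $\beta_k\equiv 1$ leaves only weak convergence of $f_1$ matches the paper's own remark after \eqref{30}. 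What the paper's route buys: an explicit quantitative bound $\|f-f_2\|\leqslant 2E\,\beta_{k_0(\varepsilon,E)}^{-1}$, i.e., in principle a convergence rate expressed through the spectral data, which is illustrated in Example \ref{exa:1} and foreshadows the stability estimates of Sect. \ref{se:stability}; your $\eta$--$\varepsilon_0$ argument establishes the limit \eqref{28} but yields no rate. Your closing claim that only \eqref{23.a} and \eqref{23.b} are needed, and no further property of $A$ beyond the spectral expansion, is accurate and is precisely what distinguishes the two proofs.
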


\begin{proof}
Let $C=A^*A+(\frac{\varepsilon}{E})^2B^*B$. The operator $C$ is evidently self-adjoint,
and we denote by $\{\gamma_k^2\}_{k=1}^\infty$ the set of its eigenvalues. Then, from inequality
\eqref{23.c} we get
\beq
\left(\sum_{k=1}^\infty\gamma_k^2\,\left|f_k-(f_2)_k\right|^2\right)^\ud\leqslant 2\varepsilon. \nonumber
\label{29}
\eeq
From this inequality it follows
\beq
\|f-f_2\|_{L^2(a,b)}=\left(\sum_{k=1}^\infty|f_k-(f_2)_k|^2\right)^{\ud}
\leqslant 
2\,\varepsilon\,\sup_k\left(\left[\lambda_k^2+\left(\frac{\varepsilon}{E}\right)^2\beta_k^2\right]^{-\ud}\right).
\label{30}
\eeq
First we note that the right-hand side of inequality \eqref{30} does not go to zero
as $\varepsilon\to 0$ if the terms $\beta_k$ are bounded. For instance, if $\beta_k=1$
we merely obtain that $\sum_{k=1}^\infty|f_k-(f_2)_k|^2\leqslant (2E)^2$.
From this observation it results evident the need to assume $\lim_{k\to\infty}\beta_k^2=+\infty$
in the statements of the theorem.
Next, we denote by $k_0=k_0(\varepsilon,E)$ the value of the integer $k$ such that, given $\varepsilon,E>0$,
$\gamma^2_{k_0}=\lambda_{k_0}^2+(\frac{\varepsilon}{E})^2\beta_{k_0}^2$
is the smallest eigenvalue of the self-adjoint operator $C$. Then, 
recalling that $\lim_{k\to\infty}\lambda_k^2=0$, it follows that 
$k_0(\varepsilon,E)\xrightarrow[\varepsilon\to 0]{}+\infty$ ($E$ fixed). We can then write
\beq
\|f-f_2\|_{L^2(a,b)}=\left(\sum_{k=1}^\infty|f_k-(f_2)_k|^2\right)^{\ud}
\leqslant 2\varepsilon E\left(\frac{1}{\varepsilon\beta_{k_0}}\right)
=2E\beta_{k_0}^{-1}\xrightarrow[\varepsilon\to 0]{} 0, \nonumber
\label{31}
\eeq
and inequality \eqref{28} follows.
\end{proof}

\begin{example}
In order to make more transparent the proof of Theorem \ref{the:1}, consider
the following example. Let $\lambda_k=k^{-\ud}$, and assume $\beta_k=k^\ud$. 
Then we consider the following simple function: $\gamma(t)=\frac{1}{t}+(\frac{\varepsilon}{E})^2 t$,
which interpolates the eigenvalues of the operator $C$ since
$\gamma(k)=\frac{1}{k}+(\frac{\varepsilon}{E})^2 k=\gamma_k^2$. 
We can now easily evaluate the minimum of $\gamma(t)$;
we have $\gamma'(t)=-\frac{1}{t^2}+(\frac{\varepsilon}{E})^2=0$, which shows that the minimum 
is attained at $t=t_0$, where 
$t_0=\left(\frac{E}{\varepsilon}\right)\xrightarrow[\varepsilon\to 0]{}+\infty$ ($E$ fixed).
\label{exa:1}
\end{example}

We can now pass to consider the truncated expansion $f_1$. First we need to prove the following lemma.

\begin{lemma}
\label{lem:7}
For any function $f$ which satisfies bound \eqref{8} and bound \eqref{9} with $B=\I$
(i.e., $\beta_k^2=1$)
the following inequalities hold:
\begin{subequations}
\begin{align}
& \|A(f-f_1)\|_Y\leqslant \sqrt{2}\,\varepsilon, \label{34.a} \\
& \|f-f_1\|_\cZ\leqslant \sqrt{2}\,E, \label{34.b} \\
& \|A(f-f_1)\|^2 _Y+ \left(\frac{\varepsilon}{E}\right)^2
\|f-f_1\|^2_\cZ \leqslant 4\,\varepsilon^2. \label{34.c}
\end{align}
\label{34}
\end{subequations}
\end{lemma}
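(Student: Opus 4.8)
The plan is to mirror the proof of Lemma \ref{lem:6} almost verbatim, specializing to $B=\I$, $\beta_k^2=1$, $\cZ=L^2(a,b)$, and replacing the truncated approximation $f_2$ by $f_1$. With $B=\I$ the global bound \eqref{10} reduces to $\sum_{k=1}^\infty|f_k|^2\leqslant E^2$, and, in the general case $E\neq 1$, the truncation point $k_1=k_1(\varepsilon,E)$ of $f_1$ is the largest integer for which $\lambda_k\geqslant\varepsilon/E$ (this is part~(i) of Lemma \ref{lem:2} with $\beta_k=1$). The entire argument hinges on reading this defining inequality in two opposite directions: on the discarded tail $k>k_1$ one has $\lambda_k<\varepsilon/E$, whereas on the retained head $k\leqslant k_1$ one has $\lambda_k\geqslant\varepsilon/E$.

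First I would reproduce the tail estimate \eqref{24}. Because $\lambda_k^2<(\varepsilon/E)^2$ for $k>k_1$, the global bound gives $\sum_{k=k_1+1}^\infty\lambda_k^2|f_k|^2\leqslant(\varepsilon/E)^2\sum_{k=k_1+1}^\infty|f_k|^2\leqslant\varepsilon^2$. Splitting $\|A(f-f_1)\|_Y^2$ as in \eqref{25} into the head sum $\sum_{k=1}^{k_1}|\lambda_k f_k-\og_k|^2\leqslant\|Af-\og\|_Y^2\leqslant\varepsilon^2$ (using \eqref{8}) and this tail sum then yields $\|A(f-f_1)\|_Y^2\leqslant 2\varepsilon^2$, which is \eqref{34.a}.

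For \eqref{34.b} I would run the complementary estimate of \eqref{26}: on the head, $\lambda_k\geqslant\varepsilon/E$ gives $1\leqslant(E/\varepsilon)^2\lambda_k^2$, so $\sum_{k=1}^{k_1}|f_k-\og_k/\lambda_k|^2\leqslant(E/\varepsilon)^2\|Af-\og\|_Y^2\leqslant E^2$, while the tail $\sum_{k=k_1+1}^\infty|f_k|^2$ is bounded by $E^2$ straight from the global constraint. Adding the two contributions as in \eqref{27} gives $\|f-f_1\|_\cZ^2\leqslant 2E^2$, hence \eqref{34.b}. Finally \eqref{34.c} drops out immediately: $\|A(f-f_1)\|_Y^2+(\varepsilon/E)^2\|f-f_1\|_\cZ^2\leqslant 2\varepsilon^2+(\varepsilon/E)^2\cdot 2E^2=4\varepsilon^2$.

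I do not anticipate any genuine obstacle here, since the argument is structurally identical to that of Lemma \ref{lem:6}; the specialization $\beta_k=1$ only simplifies matters, as the factors $\beta_k^2$ appearing in \eqref{24}, \eqref{26}, and \eqref{27} all collapse to unity. The sole point demanding mild care is the bookkeeping of which side of the threshold inequality $\lambda_k\gtrless\varepsilon/E$ is used where: the tail inequality $\lambda_k<\varepsilon/E$ controls the data-fidelity term, and the head inequality $\lambda_k\geqslant\varepsilon/E$ controls the constraint term.
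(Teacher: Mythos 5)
Your proposal is correct and coincides with the paper's proof, which simply invokes the argument of Lemma~\ref{lem:6} with $\beta_k^2=1$ and the truncation point $k_1(\varepsilon,E)$ defined as the largest $k$ with $\lambda_k\geqslant\varepsilon/E$; you have merely written out explicitly the specializations of \eqref{24}--\eqref{27} that the paper leaves implicit. Your closing remark about which side of the threshold inequality controls which term is exactly the right bookkeeping.
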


\begin{proof}
The proof of this lemma coincides with that of Lemma \ref{lem:6}, in which we set $\beta_k^2=1$,
and recalling that the truncation point $k_1=k_1(\varepsilon,E)$ (associated with the approximation $f_1$)
is defined to be the largest $k$ such that $\lambda_k\geqslant(\frac{\varepsilon}{E})$
(see Sect. \ref{se:entropy}).
\end{proof}

\begin{theorem}
\label{the:2}
For any function $f$ which satisfies bound \eqref{8} and bound \eqref{9} with $B=\I$, the following limit holds:
\beq
\lim_{\varepsilon\to 0}\left|\left(f-f_1,v\right)\right|=0 
\qquad (v\in L^2(a,b)).
\label{35}
\eeq
\end{theorem}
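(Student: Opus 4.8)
The plan is to exploit the uniform $L^2$-bound on the family $\{f-f_1\}$ furnished by Lemma \ref{lem:7}, combine it with convergence tested against the eigenfunctions $\psi_k$, and then reach a general test function $v$ by a density argument. Since $B=\I$ gives $\beta_k^2\equiv 1$, which does not diverge, strong convergence genuinely fails (exactly as observed in the proof of Theorem \ref{the:1}, where one only recovers $\|f-f_1\|^2\leqslant(2E)^2$); the weak statement \eqref{35} is therefore the natural target, and it is the uniform boundedness that makes the weak argument work.

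First I would record that inequality \eqref{34.b}, read with $\cZ\equiv L^2(a,b)$ and $B=\I$, gives $\|f-f_1\|_{L^2(a,b)}\leqslant\sqrt{2}\,E$ uniformly in $\varepsilon$. This uniform bound is what stands in for the unavailable strong convergence. Next I would test against a single eigenfunction $\psi_j$. As $\varepsilon\to 0$ the threshold $\varepsilon/E\to 0$, and since $\lambda_j>0$ while $\{\lambda_k\}$ is non-increasing, for all sufficiently small $\varepsilon$ one has $\lambda_j\geqslant\varepsilon/E$, hence $j\leqslant k_1(\varepsilon,E)$ (equivalently $k_1(\varepsilon,E)\to+\infty$). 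For such $\varepsilon$ the $j$-th coefficient of $f_1$ is $(f_1)_j=\og_j/\lambda_j$, so using $(Af,\psi_j)=\lambda_j f_j$ (self-adjointness of $A$) I obtain
\[
(f-f_1,\psi_j)=f_j-\frac{\og_j}{\lambda_j}=\frac{1}{\lambda_j}\bigl(Af-\og,\psi_j\bigr),
\]
whence $|(f-f_1,\psi_j)|\leqslant \lambda_j^{-1}\,\|Af-\og\|_{L^2(a,b)}\leqslant \varepsilon/\lambda_j\to 0$ by bound \eqref{8}. Thus $(f-f_1,\psi_j)\to 0$ for every fixed $j$, and by linearity the same holds for every finite linear combination of the $\psi_j$.

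Finally I would upgrade this to an arbitrary $v\in L^2(a,b)$ by density. Given $\delta>0$, since $\{\psi_k\}$ is an orthonormal basis of $L^2(a,b)$ I split $v=v_N+r_N$, where $v_N=\sum_{j=1}^N(v,\psi_j)\psi_j$ and $\|r_N\|_{L^2(a,b)}<\delta$ for $N$ large. Then
\[
|(f-f_1,v)|\leqslant |(f-f_1,v_N)|+\|f-f_1\|_{L^2(a,b)}\,\|r_N\|_{L^2(a,b)}\leqslant |(f-f_1,v_N)|+\sqrt{2}\,E\,\delta,
\]
with the tail controlled by the uniform bound of the first step. Holding $N$ fixed and letting $\varepsilon\to 0$ kills the first term via the eigenfunction convergence, leaving $\limsup_{\varepsilon\to 0}|(f-f_1,v)|\leqslant\sqrt{2}\,E\,\delta$; since $\delta>0$ is arbitrary, \eqref{35} follows.

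The one delicate point is the interplay of the two limits, $\varepsilon\to 0$ and $N\to\infty$. The uniform boundedness of $\|f-f_1\|_{L^2(a,b)}$ in $\varepsilon$ is precisely what allows me to fix $N$ first (making the tail small independently of $\varepsilon$) and only then send $\varepsilon\to 0$; this is where the full force of Lemma \ref{lem:7} enters. Equivalently, one could package the same argument through weak sequential compactness of the bounded family $\{f-f_1\}$ and identify every weak limit along $\varepsilon_n\to 0$ with $f$ by testing against the $\psi_j$, but the direct density estimate above is more elementary and self-contained.
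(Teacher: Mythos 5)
Your proof is correct, but it takes a genuinely different route from the paper's. The paper works with the operator $G=A^*A+(\varepsilon/E)^2\,\I$: inequality \eqref{34.c} is read as the quadratic-form bound $(G\{f-f_1\},\{f-f_1\})\leqslant 4\varepsilon^2$, and the Schwarz inequality in the inner product $[x,y]\doteq(Gx,y)$ yields the explicit estimate
\[
\left|(f-f_1,v)\right|\leqslant 2\,\varepsilon\left(\sum_{k=1}^\infty\frac{|v_k|^2}{\lambda_k^2+(\varepsilon/E)^2}\right)^{\!1/2},
\]
after which the right-hand side is shown to vanish as $\varepsilon\to 0$ by a Weierstrass-type domination of the series (each term bounded by $E^2|v_k|^2$, hence uniform convergence and term-by-term passage to the limit). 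You instead use only \eqref{34.b} (the uniform bound $\|f-f_1\|\leqslant\sqrt{2}\,E$) together with the exact coefficient identity $(f-f_1,\psi_j)=\lambda_j^{-1}(Af-\og,\psi_j)$ valid once $j\leqslant k_1(\varepsilon,E)$, and then the standard ``bounded family plus convergence against a dense set'' criterion for weak convergence. Both arguments are sound; yours is more elementary and isolates where each hypothesis enters: \eqref{8} drives coefficientwise convergence at rate $\varepsilon/\lambda_j$, \eqref{34.b} controls the tail uniformly in $\varepsilon$ (your correctly flagged interchange of the limits in $\varepsilon$ and $N$), and the injectivity of $A$ --- a standing assumption of the paper --- guarantees that $\{\psi_k\}$ is an orthonormal basis of $L^2(a,b)$, so the finite sections $v_N$ are indeed dense. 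What the paper's weighted-Schwarz route buys in exchange is a single quantitative bound, valid for every $\varepsilon>0$ simultaneously, which serves as a modulus of weak convergence for each fixed $v$ and degrades gracefully as the coefficients $v_k$ concentrate on large $k$; it also runs structurally parallel to the proof of Theorem \ref{the:1}, with $G$ playing the role of the operator $C$ there. Your per-coefficient bound $\varepsilon/\lambda_j$ is also quantitative, but aggregating it over $j$ without the $G$-inner product would require extra care, which is precisely what the paper's Schwarz step accomplishes in one stroke.
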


\begin{proof}
Let $G=A^*A+(\frac{\varepsilon}{E})^2\I^*\I$.
Evidently $G$ is a self-adjoint operator. Inequality \eqref{34.c} can then be rewritten as follows:
\beq
(G\{f-f_1\},\{f-f_1\})\leqslant 4\varepsilon^2. \nonumber
\label{36}
\eeq 
Next, we apply the Schwarz inequality with respect to the following inner product: $[x,y]\doteq (Gx,y)$. We have
\beq
\begin{split}
(\{f-f_1\},v) &= [\{f-f_1\},G^{-1}v]\leqslant[\{f-f_1\},\{f-f_1\}]^{\ud}[G^{-1}v,G^{-1}v]^{\ud} \\
&= (G\{f-f_1\},\{f-f_1\})^{\ud}(G^{-1}v,v)^{\ud}
\leqslant 2\,\varepsilon\,(G^{-1}v,v)^{\ud} \\
&=2\,\varepsilon\left(\sum_{k=1}^\infty\frac{|v_k|^2}{\lambda_k^2+(\frac{\varepsilon}{E})^2}\right)^{\ud}.
\end{split}
\label{37}
\eeq
Now, for every $N$ we have
\beq
\sum_{k=1}^N\frac{\varepsilon^2}{\lambda_k^2+(\frac{\varepsilon}{E})^2}|v_k|^2
\leqslant E^2 \sum_{k=1}^N |v_k|^2 \leqslant E^2\,\|v\|^2<\infty. \nonumber
\label{38}
\eeq
Then, we can say that the
series $\sum_{k=1}^\infty\frac{\varepsilon^2\,|v_k|^2}{\lambda_k^2+(\varepsilon/E)^2}$ converges uniformly.
Since for any $k$ we have $\lim_{\varepsilon\to 0}\frac{\varepsilon^2\,|v_k|^2}{\lambda_k^2+(\varepsilon/E)^2}=0$,
then
\beq
\lim_{\varepsilon\to 0}2\varepsilon\left(\sum_{k=1}^\infty
\frac{|v_k|^2}{\lambda_k^2+(\frac{\varepsilon}{E})^2}\right)^\ud=0, \nonumber
\label{39}
\eeq
and therefore from inequality \eqref{37} statement \eqref{35} follows.
\end{proof}

\skd

\section{Stability estimates: H\"{o}lder and logarithmic continuity}
\label{se:stability}

\subsection{A-priori information and the backward information flow}
\label{subse:apriori}

Our goal now is to obtain an upper bound on the \emph{approximation error} $\|f-f_2\|_{L^2(a,b)}$
associated with the truncated approximation $f_2$ that, in the previous section, 
we have shown to converge strongly to the unknown function $f$ for $\varepsilon\to 0$.
For this purpose, we introduce the \emph{stability estimate}
$\fM(\varepsilon,E)$, which is defined as follows \cite{Miller} (hereafter we simply denote
by $\|\cdot\|$ the norm in $L^2(a,b)$):
\beq
\fM(\varepsilon,E)\doteq\sup\{\|f\|,f\in X\equiv L^2(a,b)\,:\,\|Af\|\leqslant\varepsilon,\|Bf\|\leqslant E\}.
\label{42}
\eeq
The quantity $\fM(\varepsilon,E)$ gives the size, in the sense of the norm $\|\cdot\|$, of the packet
of functions satisfying conditions \eqref{8} and \eqref{9}. In fact, if there exist two approximations
of $f$, and both of them satisfy conditions \eqref{8} and \eqref{9}, then it is easy to see that the 
$L^2$-norm of their difference is bounded by $2 \,\fM(\varepsilon,E)$. If $\fM(\varepsilon,E)$
tends to zero as $\varepsilon\to 0$ ($E$ fixed), the size of the packet collapses and we can
thus say that the problem of finding an approximation of $f$ (which satisfies conditions \eqref{8} and \eqref{9})
is stable with respect to the norm $\|\cdot\|$. We can therefore appropriately call $\fM(\varepsilon,E)$
the \emph{stability estimate} and an upper bound of $\fM(\varepsilon,E)$ the
\emph{best possible stability estimate}. Next, if we consider the approximation $f_2(x)$
(which converges strongly to $f(x)$) and take into account inequalities \eqref{23.a} and \eqref{23.b}
and definition \eqref{42}, we obtain the following bound on the \emph{approximation error}:
$\|f-f_2\|\leqslant \sqrt{2}\,\fM(\varepsilon,E)$. Furthermore, it is rather interesting to investigate how fast
$\fM(\varepsilon,E)$ tends to zero as $\varepsilon\to 0$ ($E$ fixed). We can indeed distinguish
between H\"older-type and logarithmic-type dependence of $\fM(\varepsilon,E)$ on $\varepsilon$
($E$ fixed). To this end, it is convenient to state the following lemma.

\begin{lemma}
\label{lem:8}
Assume that the eigenvalues $\{\lambda_k^2\}_{k=1}^\infty$ associated with the operator $A^*A$ 
(see \eqref{1}) and the eigenvalues $\{\beta_k^2\}_{k=1}^\infty$ associated with the operator $B^*B$
satisfy the following inequality: 
\beq
\lambda_k^2\geqslant\beta_k^2\ p(\beta_k^{-2}),
\label{43prev}
\eeq
where the function $r\mapsto p(r)$ enjoys the following properties:
\begin{itemize}\addtolength{\itemsep}{0.1\baselineskip}
\item[(i)] $0<r\mapsto\frac{p(r)}{r}$ is a positive increasing function;
\item[(ii)] $p(0^+)=0$;
\item[(iii)] $p(r)$ is a convex function.
\end{itemize}
Then the following inequality holds:
\beq
\fM(\varepsilon,E)\leqslant E \cdot \sqrt{p^{-1}\!\!\left(\frac{\varepsilon^2}{E^2}\right)},
\label{43}
\eeq
where $p^{-1}$ denotes the inverse of $p$.
\end{lemma}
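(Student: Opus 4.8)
The plan is to diagonalize the whole problem in the common eigenbasis $\{\psi_k\}$ (recall that $B^*B$ commutes with $A^*A$), so that \eqref{42} becomes the maximization of $\|f\|^2=\sum_k|f_k|^2$ subject to the two quadratic constraints $\sum_k\lambda_k^2|f_k|^2\leqslant\varepsilon^2$ and $\sum_k\beta_k^2|f_k|^2\leqslant E^2$, and then to convert the hypothesis \eqref{43prev} into a scalar inequality that can be closed by Jensen's inequality. First I would introduce the nonnegative weights $w_k\doteq\beta_k^2|f_k|^2$, with total mass $W\doteq\sum_k w_k\leqslant E^2$, and rewrite $\|f\|^2=\sum_k\beta_k^{-2}w_k$. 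Inserting \eqref{43prev} in the form $\lambda_k^2\geqslant\beta_k^2\,p(\beta_k^{-2})$ into the data constraint gives
\[
\varepsilon^2\geqslant\|Af\|^2=\sum_k\lambda_k^2\,\beta_k^{-2}w_k\geqslant\sum_k p(\beta_k^{-2})\,w_k .
\]

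Next I would normalize the weights to a probability vector $\tilde w_k\doteq w_k/W$ and apply Jensen's inequality to the convex function $p$ (property (iii)). On one side $\|f\|^2/W=\sum_k\beta_k^{-2}\tilde w_k$, while on the other the displayed bound yields $\varepsilon^2/W\geqslant\sum_k p(\beta_k^{-2})\,\tilde w_k\geqslant p\!\bigl(\sum_k\beta_k^{-2}\tilde w_k\bigr)=p\!\bigl(\|f\|^2/W\bigr)$. Since properties (i)–(ii) force $p$ to be strictly increasing on $(0,\infty)$ with $p(0^+)=0$ (for $r_1<r_2$ one has $p(r_2)=r_2\frac{p(r_2)}{r_2}>r_1\frac{p(r_1)}{r_1}=p(r_1)$), the inverse $p^{-1}$ exists and is increasing, and applying it produces $\|f\|^2\leqslant W\,p^{-1}(\varepsilon^2/W)$.

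It then remains to replace the unknown $W$ by its worst admissible value $E^2$. I would verify that $\phi(W)\doteq W\,p^{-1}(\varepsilon^2/W)$ is nondecreasing on $(0,E^2]$: putting $s=\varepsilon^2/W$ and $r=p^{-1}(s)$ (so $s=p(r)$), one has $\phi(W)=\varepsilon^2\,p^{-1}(s)/s=\varepsilon^2\,r/p(r)$, and since $r\mapsto p(r)/r$ is increasing by (i) while $r=p^{-1}(s)$ increases with $s$, the ratio $p^{-1}(s)/s$ decreases in $s$; as $s$ decreases when $W$ increases, $\phi$ is nondecreasing in $W$. Hence $\|f\|^2\leqslant\phi(W)\leqslant\phi(E^2)=E^2\,p^{-1}(\varepsilon^2/E^2)$, and taking square roots and the supremum over admissible $f$ delivers \eqref{43}.

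The main obstacle I anticipate is not any single estimate but the rigorous justification of the Jensen step for a countable, possibly infinite, index set: one must treat $\{\tilde w_k\}$ as a genuine probability measure and confirm that the inequality $\sum_k p(\beta_k^{-2})\tilde w_k\geqslant p(\sum_k\beta_k^{-2}\tilde w_k)$ survives in this discrete-measure setting. Two degenerate situations also need a brief word, namely $W=0$ (which forces $f\equiv 0$, so \eqref{43} holds trivially) and the requirement that $\varepsilon^2/W$ lie in the range of $p$ so that $p^{-1}$ is well defined; both are controlled by the convexity and boundary behaviour already encoded in (i)–(iii), so I expect them to be routine rather than genuinely difficult.
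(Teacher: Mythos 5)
Your proof is correct and follows essentially the same route as the paper's: you reduce to the common eigenbasis, apply Jensen's inequality to the convex function $p$ with the probability weights $\tilde w_k=\beta_k^2|f_k|^2/\|Bf\|^2$, and then invoke the monotonicity of $r\mapsto p(r)/r$ to pass from $\|Bf\|^2$ to $E^2$ --- exactly the two ingredients of the published argument. The only cosmetic difference is the order of operations: you invert $p$ first and phrase the final step as monotonicity of $W\mapsto W\,p^{-1}(\varepsilon^2/W)$, whereas the paper performs the replacement at the level of $p$ itself, via $E^2\,p\!\left(\|f\|^2/E^2\right)\leqslant\|Bf\|^2\,p\!\left(\|f\|^2/\|Bf\|^2\right)$, and applies $p^{-1}$ last.
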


\begin{proof}
We start from Jensen's inequality, which holds in view of the convexity of the function $p$
\beq
p\!\left(\sum_{k=1}^\infty a_k b_k\right) \leqslant \sum_{k=1}^\infty a_k \ p(b_k)
\qquad \left(\sum_{k=1}^\infty a_k=1,\, a_k \geqslant 0\right).
\label{44}
\eeq
Next, we put in \eqref{44}: $b_k=\beta_k^{-2}$ and
$a_k=\frac{\beta_k^2 |f_k|^2}{\|Bf\|^2}$, so that $\sum_{k=1}^\infty a_k=1$.
Then, from \eqref{44} and using assumption \eqref{43prev}, we have
\beq
\begin{split}
p\!\left(\sum_{k=1}^\infty\frac{\beta_k^2 |f_k|^2}{\|Bf\|^2}\frac{1}{\beta_k^2}\right)
&=p\!\left(\frac{\|f\|^2}{\|Bf\|^2}\right)
\leqslant\sum_{k=1}^\infty\frac{\beta_k^2 |f_k|^2}{\|Bf\|^2}\,p\!\left(\frac{1}{\beta_k^2}\right) \\
&\leqslant\sum_{k=1}^\infty\frac{\lambda_k^2 |f_k|^2}{\|Bf\|^2}
=\frac{\|Af\|^2}{\|Bf\|^2},
\end{split} \nonumber
\label{45}
\eeq
from which we extract the inequality we need
\beq
p\!\left(\frac{\|f\|^2}{\|Bf\|^2}\right)\leqslant\frac{\|Af\|^2}{\|Bf\|^2}.
\label{45b}
\eeq
Now, we set
$r_1=\|f\|^2/E^2$ and $r_2=\|f\|^2/\|Bf\|^2$, and we have $r_1\leqslant r_2$ for $\|Bf\|\leqslant E$.
The monotonicity assumption (i) for $p(r)/r$ yields the following inequality:
\beq
E^2 \ p\!\left(\frac{\|f\|^2}{E^2}\right) \leqslant \|Bf\|^2 \ p\!\left(\frac{\|f\|^2}{\|Bf\|^2}\right).
\label{45c}
\eeq
Finally, combining \eqref{45b} and \eqref{45c}, and recalling definition \eqref{42} of $\fM(\varepsilon,E)$,
inequality \eqref{43} follows. 
\end{proof}

\sku

\begin{example}
\label{exa:2}
As a first example consider the kernel \eqref{20} and the corresponding
eigenvalues $\lambda_k=(k\pi)^{-2}$. It is worth observing that the assumption
$\lambda_k^2\geqslant\beta_k^2\ p(\beta_k^{-2})$ (along with the assumed properties for $p(r)$) 
implies necessarily that $\lim_{k\to\infty}\beta_k^2=+\infty$, as required by
Theorem \ref{the:1} in order to have the strong convergence of the approximation $f_2$.
Now, we assume the function $p(r)$ to be: $p(r)=r^{1/\gamma}$
($0<\gamma<1$), which satisfies conditions (i), (ii), and (iii) 
of Lemma \ref{lem:8}. Then inequality \eqref{43prev} reads:
$\lambda_k^2\geqslant\beta_k^{[2(\gamma-1)/\gamma]}$, which leads to the condition the eigenvalues
$\beta_k$ are required to satisfy, i.e., $\beta_k\geqslant(k\pi)^{[2\gamma/(1-\gamma)]}$.
Note that, since $\frac{2\gamma}{(1-\gamma)}>0$, the latter inequality implies that the sequence of eigenvalues
$\{\beta_k^2\}_{k=1}^\infty$ must tend (sufficiently fast) to infinity for $k\to\infty$.
For this example it is easy to find a differential operator $B$ such that $B^*B$ commute with $A^*A$
and whose eigenvalues $\beta_k^2$ satisfy the condition $\lim_{k\to\infty}\beta_k^2=+\infty$.
Let us set $\gamma=\frac{1}{3}$; accordingly, from the inequality 
$\beta_k\geqslant(k\pi)^{[2\gamma/(1-\gamma)]}$ we obtain: $\beta_k\geqslant k\pi$. Therefore 
we can take as constraint operator the first derivative $B=\frac{\rmd}{\rmd x}$, which corresponds to $\beta_k=k\pi$.
Now, for the sake of simplicity we put $E=1$; then, the truncation point $k_2(\varepsilon)$ associated 
with the approximation $f_2(x)$ ($k_2(\varepsilon)$ being the largest value of $k$ 
such that $\lambda_k\geqslant\varepsilon\beta_k$) is given by:
$k_2(\varepsilon)=(\pi\sqrt[3]{\varepsilon})^{-1}$, 
which is smaller than 
$k_1(\varepsilon)=(\pi\sqrt{\varepsilon})^{-1}$ 
($\varepsilon\ll 1$), representing the truncation point associated with the approximation $f_1(x)$ which
converges weakly to $f$ (see Theorem \ref{the:2}). On the other hand, from formula 
\eqref{43} we obtain an upper bound for the stability estimate, which is given by
$\fM(\varepsilon,1)\leqslant\sqrt[3]{\varepsilon}$. 
Accordingly, the \emph{approximation error} is bounded as follows: $\|f-f_2\|\leqslant \sqrt{2}\sqrt[3]{\varepsilon}$, 
which tends rapidly to zero as $\varepsilon\to 0$: we have a H\"older-type continuity in the stability estimate.
\end{example}

Now, we have reached an apparently paradoxical situation: if we consider the approximation
$f_1(x)$, which converges only weakly to $f(x)$ (see Theorem \ref{the:2}), the maximum
number of messages which can be conveyed back from the data set for recovering the solution 
is given at least by $2^{[\sum_{k=1}^{k_1(\varepsilon)}\log_2\frac{\lambda_k}{\varepsilon}]}$,
which is larger than $2^{[\sum_{k=1}^{k_2(\varepsilon)}\log_2\frac{\lambda_k}{\varepsilon}]}$,
which represents the maximum number of messages that can be sent back from the data set
associated with the approximation $f_2(x)$, converging strongly to $f$ (see Theorem \ref{the:1}).

The paradox outlined above goes beyond the specific example illustrated so far and it is quite general.
It can be solved distinguishing between \emph{a-priori information} and
\emph{transmitted information}. 
The greater the amount of \emph{a-priori information} which restricts the class of the admissible 
solutions (that is, the stronger the a-priori bound $\|Bf\|_\cZ\leqslant E$), 
the smaller the amount of \emph{information transmitted back} (that is, the smaller the maximum number 
of messages which should be conveyed back to reconstruct the solution).

\skd

Let us now consider the second example of Sect. \ref{se:entropy} (see formula \eqref{22}).
The eigenfunctions of the integral operator $A$ (see \eqref{1}) acting as follows
$A: L^2(-1,1)\to L^2(-1,1)$, whose kernel is $K(x,y)=\frac{\sin[c(x-y)]}{\pi(x-y)}$
(see \eqref{22}), are the so-called \emph{prolate spheroidal functions} and are denoted
by $\psi_k(c,x)$ \cite{Flammer,Frieden,Landau,Slepian}.
They can be defined as the continuous solutions, on the closed interval $[-1,1]$,
of the following differential equation \cite{Flammer}:
\beq
-[(1-x^2)\psi'(x)]'+c^2\psi^2(x)=\chi\,\psi(x).
\label{prolate}
\eeq
Continuous solutions exist only for certain discrete positive values $\chi_k$ of the parameter $\chi$:
$0<\chi_0<\chi_1<\cdots$. Then $\psi_k(c,x)$ is just the solution of \eqref{prolate} corresponding
to the eigenvalues $\chi_k$. The behavior of $\chi_k$ when $k\to+\infty$ is \cite{Flammer}
\beq
\chi_k=k(k+1)+\uds c^2 + O\left(k^{-2}\right).
\nonumber
\label{prolate1}
\eeq
Then, through the differential operator given on the left-hand side of \eqref{prolate} we can obtain
the operator $B^*B$ that commutes with $A^*A$. But let us note that while the eigenvalues of $B^*B$
present a power-like increase for $k\to+\infty$, the eigenvalues $\lambda_k$ decrease for $k\to+\infty$
as $\lambda_k=O\left\{\frac{1}{k}\exp[-2k\ln(\frac{k}{ec})]\right\}$ \cite{Landau}.
In this case the results of Lemma \ref{lem:8} cannot be applied in a strict sense 
but we should limit ourselves to considerations holding
only for sufficiently large values of $k$. We note that if we choose a function $p(r)$ such that
$p(r)\staccrel{\sim}{r\to 0}4r\exp(-2/r)$ then 
$p^{-1}(s)\staccrel{\sim}{s\to 0}2\left|\ln\frac{s}{4}\right|^{-1}$ \cite{Bertero2,Talenti}.
With this choice of $p(r)$ inequality \eqref{43prev} gives a condition leading to 
a sequence $\{\beta_k^2\}_{k=1}^\infty$ which presents a divergence of the following type:
$\beta_k^2\gtrsim 2k\ln(k/ec)$. Correspondingly, from \eqref{43}
we can argue that the stability estimate $\fM(\varepsilon,1)$ (we put $E=1$) for the approximation
$f_2(x)$ satisfies (in a neighborhood of $\varepsilon=0^+$) a logarithmic-type continuity, i.e.,
$\fM(\varepsilon,1)\lesssim |\ln(\varepsilon/2)|^{-1/2}$.

For this latter example it is interesting to study also the weakly convergent approximation $f_1(x)$. To this end,
we rewrite the integral equation \eqref{1} with kernel \eqref{22} (using now, for later convenience, the standard notation of
optics and information theory, i.e., $\Omega=c$ and support of the functions
$(-\frac{X}{2},\frac{X}{2})$ instead of $(-1,1)$)
\beq
(Af)(x) = \int_{-X/2}^{X/2}\frac{\sin[\Omega(x-y)]}{\pi(x-y)}f(y)\,\rmd y = g(x)
\quad
\left(f,g\in L^2\left(-\frac{X}{2},\frac{X}{2}\right)\right), \nonumber
\label{47}
\eeq
which can be written as follows:
\beq
\frac{1}{2\pi}\int_{-\Omega}^{\Omega}\rmd\omega\,e^{-\rmi\omega x}
\left[\int_{-X/2}^{X/2}e^{\rmi\omega y}f(y)\,\rmd y\right]=g(x).
\label{48}
\eeq
Next, the left-hand side of \eqref{48} can be split further by writing
\begin{align}
& g(x)=\frac{1}{\sqrt{2\pi}}\int_{-\Omega}^{\Omega} F(\omega) e^{-\rmi\omega x}\,\rmd\omega, \label{49} \\
\intertext{where}
& F(\omega) = \frac{1}{\sqrt{2\pi}}\int_{-X/2}^{X/2}e^{\rmi\omega y}f(y)\,\rmd y. \label{50}
\end{align}
Equality \eqref{50} shows that $F(\omega)$ is an entire function in the $\omega$-plane since
$f(y)$ has compact support, vanishing outside the interval of length $X$. But also equality
\eqref{49} can be regarded as the Fourier transform of a function which is given by
$F(\omega)$ in the interval $-\Omega\leqslant\omega\leqslant\Omega$, and zero outside.
Therefore also $g(x)$ is an entire function in the $x$-plane and then can be reconstructed
by a discrete collection of its values, chosen in arithmetic progression with difference
$\frac{\pi}{\Omega}$, as proved originally by De La Vall\'ee-Poussin.
In particular, the function $g(x)$ can be reconstructed in the interval
$\left(-\frac{X}{2},\frac{X}{2}\right)$ of length $X$ from the knowledge of its values 
on a set of $S$ points, where $S\doteq\frac{X}{\pi/\Omega}=\frac{\Omega X}{\pi}$ is usually 
called in information theory and in optics the \emph{Shannon number} \cite{Toraldo}. 
Now, it turns out that if $S$ is sufficiently large the eigenvalues $\{\lambda_k\}_{k=1}^\infty$ form a non-increasing
ordered sequence (i.e., $1\geqslant\lambda_1\geqslant\lambda_2\geqslant\ldots$)
which enjoys a step-like behavior \cite{Frieden,Slepian,Toraldo}: they are approximately equal to $1$
for $k<\lfloor S\rfloor+1$ and, successively, for $k\geqslant\lfloor S\rfloor+1$ fall off to zero
very rapidly (the symbol $\lfloor x\rfloor$ standing for the integral part of $x$). 
Therefore, if we return to the approximation $f_1(x)$ and, accordingly, to Lemma \ref{lem:2}, we obtain
the following estimate for the $\varepsilon$-entropy $H_\varepsilon(\cE^{(1)})$:
\beq
\sum_{k=1}^{k_1}\log_2\frac{\lambda_k}{\varepsilon}
=\sum_{k=1}^{\lfloor S\rfloor}\log_2\frac{\lambda_k}{\varepsilon}
+\sum_{k=\lfloor S\rfloor+1}^{k_1}\log_2\frac{\lambda_k}{\varepsilon}.
\label{51}
\eeq 
Since for $k\leqslant\lfloor S\rfloor$ we have $\lambda_k\simeq 1$, the contribution
of the first sum on the right-hand side of formula \eqref{51} is approximately given by
$S\log_2\varepsilon^{-1}$; moreover, for $k\geqslant\lfloor S\rfloor+1$,
the eigenvalues are approximately given by $\lambda_k\simeq\varepsilon$, then
the second sum on the right-hand side of \eqref{51} is nearly null. We can thus conclude that the
maximum number of messages sent back from the data set is at least given by
\beq
M_\varepsilon(\cE^{(1)}) \gtrsim 2^{\left(S\log_2\varepsilon^{-1}\right)}
\xrightarrow[\varepsilon\to 0]{} +\infty, \nonumber
\label{52}
\eeq
which gives a simple and clear estimate of the backward information flow.

\section{Conclusions}
\label{se:conclusions}

We can now draw the following conclusions.
\begin{itemize}
\item[1.] The regularization of the Fredholm integral equations of the first kind,
realized by truncating the expansions in terms of eigenfunctions of the integral
operator, can be reconsidered from the viewpoint of the $\varepsilon$-covering of compacta.
More specifically, the truncation points of the expansions can be determined by studying the minimal number
of sets in an $\varepsilon$-covering of compact ellipsoids. From these evaluations
we can recover an estimate of the maximum number of messages which can be conveyed back
from the data set to recover the solution.
\item[2.] We obtain two different classes of approximations: strongly convergent
approximations and weakly convergent approximations.
\item[3.] In the case of strongly convergent approximation it is interesting to
control the \emph{stability estimate} and, accordingly, the \emph{reconstruction error}.
We can distinguish between a H\"older-type stability and a logarithmic-type stability.
In the first case the \emph{error function} presents a H\"older-type dependence
on the noise $\varepsilon$; in the second case the \emph{error function} depends
logarithmically on the noise $\varepsilon$, i.e., $\sim|\ln(\frac{\varepsilon}{E})|^{-1/2}$.  
\item[4.] Regarding the strongly convergent approximation, we encounter an apparently paradoxical situation:
the maximum number of messages which can be conveyed back from the data set for recovering 
the solution is smaller if the \emph{a-priori bound} is stronger and, accordingly,
if the class of admissible solutions is stricter.
The paradox can be solved by distinguishing between \emph{a-priori information}
and \emph{transmitted information}. If the \emph{a-priori bound}, which limits the class
of the admissible solutions, is very strict it follows that a small number of messages, sent back from
the data set, is sufficient to recover the solution.
To a greater amount of \emph{a-priori information} there corresponds a smaller amount of
\emph{transmitted information} which is necessary for finding the unknown solution.
\item[5.] Point (4) sheds light on the relevance of Assumption (A)
(see the Introduction). The standard regularization procedures (specifically, in the case
of the truncated approximations) work only if it is possible to introduce
\emph{a-priori bounds} such that the components $\og_k$ ($\og_k=(\og,\psi_k)$)
which have been retained in the approximate solution are those carrying the bulk of the
unknown solution, while those which are cut off can be actually neglected.
\end{itemize}

\skt

\end{document}